\newtheorem{theorem}{Theorem}[part]
\newtheorem{proposition}[theorem]{Proposition}
\newtheorem{corollary}[theorem]{Corollary}
\newtheorem{lemma}[theorem]{Lemma}
\theoremstyle{definition}
\newtheorem{definition}[theorem]{Definition}
\newtheorem{example}[theorem]{Example}
\newtheorem{construction}[theorem]{Construction}
\theoremstyle{remark}
\newtheorem*{remark}{Remark}
\numberwithin{equation}{part}
\numberwithin{figure}{part}
\numberwithin{section}{part}
\def\C{\mathbb C}
\def\D{\mathbb D}
\def\I{\mathbb I}
\def\R{\mathbb R}
\def\T{\mathbb T}
\def\Z{\mathbb Z}
\def\sK{\mathcal K}
\def\phi{\varphi}
\newcommand{\mb}[1]{{\textbf {\textit#1}}}
\renewcommand{\ge}{\geqslant}
\renewcommand{\le}{\leqslant}
\newcommand{\llra}{\relbar\joinrel\hspace{-1pt}\longrightarrow}
\newcommand{\id}{\mathrm{id}}
\newcommand{\bideg}{\mathop{\rm bideg}}
\newcommand{\mdeg}{\mathop{\rm mdeg}}
\newcommand{\Ker}{\mathop{\rm Ker}}
\newcommand{\Tor}{\mathop{\rm Tor}\nolimits}
\def\conv{\mathop{\mathrm{conv}}}
\newcommand{\zk}{\mathcal Z_{\mathcal K}}
\newcommand{\zp}{\mathcal Z_P}
\begin{document}
\title[Moment-angle manifolds and complexes]{Moment-angle manifolds and complexes.
\\ Lecture notes KAIST'2010}

\author{Taras Panov}
\address{Department of Mathematics and Mechanics, Moscow
State University, Leninskie Gory, 119991 Moscow, Russia\newline
\emph{and}
\newline Institute for Theoretical and Experimental Physics, Moscow
117259, Russia}
\email{tpanov@mech.math.msu.su}

\thanks{The author was supported by the Russian Foundation for
Basic Research, grants no.~10-01-92102-JF and~09-01-00142, and the
State Programme for the Support of Leading Scientific Schools,
grant no.~5413.2010.1.}


\begin{abstract}
These are notes of the lectures given during the Toric Topology
Workshop at the Korea Advanced Institute of Science and Technology
in February 2010. We describe several approaches to moment-angle
manifolds and complexes, including the intersections of quadrics,
complements of subspace arrangements and level sets of moment
maps. We overview the known results on the topology of
moment-angle complexes, including the description of their
cohomology rings, as well as the homotopy and diffeomorphism types
in some particular cases. We also discuss complex-analytic
structures on moment-angle manifolds and methods for calculating
invariants of these structures.
\end{abstract}

\maketitle

\vspace{-1\baselineskip}

\tableofcontents
\newpage


\def\partname{}
\renewcommand\thepart{}
\part*{Preface}
Moment-angle manifolds and complexes are one of the key players in
\emph{toric topology}, a new and actively developing field on the
borders of equivariant topology, symplectic and algebraic
geometry, and combinatorics. As it was remarkably observed by
Bosio and Meersseman in~\cite{bo-me06}, polytopal moment-angle
manifolds admit complex-analytic structures as LVM manifolds,
therefore creating a new link from toric topology to complex
geometry. The general idea behind these lectures was to broaden
this link and whenever possible unify the complex-analytic
approach to moment-angle manifolds via the LVM-manifolds and the
combinatorial topological approach of~\cite{bu-pa02} via the
moment-angle complexes and coordinate subspace arrangements.

In Lecture I we review the convex-geometrical construction of the
moment-angle manifold $\zp$ corresponding to a simple
polytope~$P$, following~\cite{bu-pa02} and~\cite{b-p-r07}. This
manifold embeds into $\C^m$ with trivial normal bundle, and an
explicit framing is specified by writing the embedded submanifold
as a nondegenerate intersection of quadratic surfaces. This chimes
with the approach of~\cite{bo-me06}, who considered nondegenerate
(or transverse) intersections of several homogeneous real quadrics
with a unit sphere in~$\C^m$; these manifolds were referred to as
\emph{links}. A result of~\cite{bo-me06} shows that the class of
links coincides with the class of polytopal moment-angle manifolds
$\zp$ if we allow redundant inequalities in the presentation of
the polytope~$P$. The two approaches to define the same manifolds
are connected by a sort of duality related to the Gale duality in
convex geometry. This duality was already pretty well understood
and described in~\cite{bo-me06} from the point of view of links of
quadrics (`from quadrics to polytopes'). We take time to go the
other way (`from polytopes to quadrics') thoroughly, by starting
from a moment-angle manifold $\zp$ and eventually coming to
different presentations of it by quadrics, and understanding the
underlying convex geometry on the way.

In Lecture II we review several other approaches to moment-angle
manifolds: the original approach of
Davis--Januszkiewicz~\cite{da-ja91} where $\zp$ was defined as a
certain identification space; the approach of~\cite{bu-pa02} via
the \emph{moment-angle complexes} $\zk$ corresponding to
simplicial complexes~$\sK$ (which also leads to a much wider class
of \emph{non polytopal} moment-angle manifolds corresponding to
sphere triangulations~$\sK$); coordinate subspace arrangements and
their complements; and the symplectic approach where $\zp$ appears
as the level set for the moment map used in the construction of
\emph{Hamiltonian toric manifolds} via symplectic reduction. We
omit the homotopy theoretic approach to moment-angle complexes
because of the geometric nature of these lectures; the reader is
referred to~\cite[Ch.~6]{bu-pa02} for the basics (including the
homotopy fibre interpretation of~$\zk$),
and~\cite{gr-th07},~\cite{b-b-c-g10} for more recent developments.

Lecture III is dedicated to the topology of moment-angle manifolds
and complexes. The cohomology ring of a moment-angle complex $\zk$
is isomorphic to the Tor-algebra of the \emph{Stanley--Reisner
face ring} of $\sK$ and may be described as the cohomology of a
Koszul-like dga with an explicit differential. This result was
first proved in~\cite{bu-pa99}; here we give a sketch of a more
recent proof from~\cite{b-b-p04}. We also derive several
corollaries of the calculation of $H^*(\zk)$, including a
description in terms of full subcomplexes of~$\sK$; the
corresponding formula for the Tor-groups of the face ring is known
in the combinatorial commutative algebra as the \emph{Hochster
theorem}. It is quite clear from any of these descriptions of
$H^*(\zk)$ that there is no hope for a reasonable topological
classification of moment-angle manifolds. However, there are two
methods of quite different origin and flavour which may be used to
describe the topology of $\zp$ and $\zk$ explicitly for some
particular polytopes~$P$ and complexes~$\sK$. The first method is
geometric and may be applied to polytopal moment-angle manifolds:
it is based on the interpretation of $\zp$ as an intersection of
quadrics and uses the surgery theory. The topology of
intersections of quadrics is well understood when the number of
quadrics is small (this corresponds to polytopes $P$ with few
facets; the case of three quadrics was done in~\cite{lope89}), or
when the polytope $P$ is of special type, e.g. obtained by
iteratively cutting vertices off a
simplex~\cite{bo-me06},~\cite{gi-lo09}. The second method is
homotopical; different homotopy-theoretical interpretations of the
moment-angle complex $\zk$ from~\cite{bu-pa02} may be used to
identify its homotopy type as that of a wedge of spheres for some
special series of simplicial complexes~$\sK$~\cite{gr-th07}.

Complex-analytic aspects of the theory of moment-angle complexes
are the subject of Lecture IV. Nondegenerate intersections of
quadrics were studied in holomorphic dynamics as the transverse
sets to certain complex foliations. This study led to a discovery
of a new class of compact non-K\"ahler complex-analytic manifolds
in the work of Lopez de Medrano and Verjovsky~\cite{lo-ve97} and
Meersseman~\cite{meer00}, now known as the \emph{LVM-manifolds}.
As we already mentioned above, Bosio and Meersseman~\cite{bo-me06}
were first to observe that the smooth manifolds underlying a large
class of LVM-manifolds are exactly the polytopal moment-angle
manifolds. It therefore became clear that the moment-angle
manifolds $\zp$ admit non-K\"ahler complex structures generalising
the families of Hopf and Calabi--Eckmann manifolds. We review the
construction of LVM-manifolds and give an argument
of~\cite{bo-me06} providing~$\zp$ with a complex structure of an
LVM manifold. In the final section we apply the spectral sequence
of Borel to analyse the Dolbeault cohomology and Hodge numbers of
these complex structures.

I wish to thank the Korea Advanced Institute of Science and
Technology (KAIST) and especially Dong Youp Suh for organising a
Toric Topology Workshop in February 2010, where these lectures
were delivered, and providing excellent work conditions.

\newpage

\setcounter{part}0
\def\partname{Lecture}
\renewcommand\thepart{\Roman{part}}

\part{Moment-angle manifolds from polytopes}

\section{From polytopes to quadrics}
Let $\R^n$ be a Euclidean space with scalar product
$\langle\;,\:\rangle$. We consider convex polyhedrons defined as
intersections of $m$ halfspaces:
\begin{equation}\label{ptope}
  P=\bigl\{\mb x\in\R^n\colon\langle\mb a_i,\mb x\rangle+b_i\ge0\quad\text{for }
  i=1,\ldots,m\bigr\},
\end{equation}
where $\mb a_i\in\R^n$ and $b_i\in\R$. Assume that the hyperplanes
defined by the equations $\langle\mb a_i,\mb x\rangle+b_i=0$ are
in general position, i.e. at most $n$ of them meet at a single
point. Assume further that $\dim P=n$ and $P$ is bounded (which
implies that $m>n$). Then $P$ is an $n$-dimensional \emph{simple
polytope}. Set
\[
  F_i=\bigl\{\mb x\in P\colon\langle\mb a_i,\mb x\rangle+b_i=0\bigr\}.
\]
Since the hyperplanes are in general position, each $F_i$ is
either empty or a \emph{facet} (an $(n-1)$-dimensional face)
of~$P$. If $F_i$ is empty, then the $i$th inequality
in~\eqref{ptope} is \emph{redundant}; removing it does not change
the set~$P$.

Let $A_P$ be the $m\times n$ matrix of row vectors $\mb a_i$, and
$\mb b_P$ be the column vector of scalars $b_i\in\R^n$. Then we
can write~\eqref{ptope} as
\[
  P=\bigl\{\mb x\in\R^n\colon A_P\mb x+\mb b_P\ge\mathbf 0\},
\]
and consider the affine map
\[
  i_P\colon \R^n\to\R^m,\quad i_P(\mb x)=A_P\mb x+\mb b_P.
\]
It embeds $P$ into
\[
  \R^m_\ge=\{\mb y\in\R^m\colon y_i\ge0\quad\text{for }
  1\le i\le m\}.
\]

We identify $\C^m$ (as a real vector space) with $\R^{2m}$ using
the map
\begin{equation}\label{crid}
  \mb z=(z_1,\ldots,z_m)\mapsto(x_1,y_1,\ldots,x_m,y_m),
\end{equation}
where $z_k=x_k+iy_k$ for $k=1,\ldots,m$.

We define the space $\mathcal Z_P$ from the commutative diagram
\begin{equation}\label{cdiz}
\begin{CD}
  \mathcal Z_P @>i_Z>>\C^m\\
  @VVV\hspace{-0.2em} @VV\mu V @.\\
  P @>i_P>> \R^m_\ge
\end{CD}
\end{equation}
where $\mu(z_1,\ldots,z_m)=(|z_1|^2,\ldots,|z_m|^2)$. The latter
map may be thought of as the quotient map for the coordinatewise
action of the standard torus
\[
  \T^m=\{\mb z\in\C^m\colon|z_i|=1\quad\text{for }1\le i\le m\}
\]
on~$\C^m$. Therefore, $\T^m$ acts on $\zp$ with quotient $P$,
and $i_Z$ is a $\T^m$-equivariant embedding.

The image of $\R^n$ under $i_P$ is an $n$-dimensional affine plane
in $\R^m$, which can be written as
\begin{equation}\label{iprn}
\begin{aligned}
  i_P(\R^n)&=\{\mb y\in\R^m\colon\mb y=A_P\mb x+\mb b_P\quad
  \text{for some }\mb x\in\R^n\}\\
  &=\{\mb y\in\R^m\colon\varGamma\mb y=\varGamma\mb b_P\},
\end{aligned}
\end{equation}
where $\varGamma=(\gamma_{jk})$ is an $(m-n)\times m$ matrix whose
rows form a basis of linear relations between the vectors~${\mb
a}_i$. That is, $\varGamma$ is of full rank and satisfies the
identity $\varGamma A_P=0$. Then we obtain from~\eqref{cdiz} that
$\zp$ embeds into $\C^m$ as the set of common zeros of $m-n$ real
quadratic equations:
\begin{equation}\label{zpqua}
  i_Z(\zp)=\Bigl\{\mb z\in\C^m\colon\sum_{k=1}^m\gamma_{jk}|z_k|^2=
  \sum_{k=1}^m\gamma_{jk}b_k,\;\text{ for }1\le j\le m-n\Bigr\}.
\end{equation}

The following properties of $\zp$ easily follow from its
construction.

\begin{proposition}\label{easyzp}
\begin{itemize}
\item[(a)] Given a point $z\in\zp$, the $i$th coordinate of $i_Z(z)\in\C^m$ vanishes if and only
if $z$ projects onto a point $\mb x\in P$ such that $\mb x\in
F_i$.

\item[(b)] Adding a redundant inequality to~\eqref{ptope} results in multiplying $\zp$ by a circle.
\end{itemize}
\end{proposition}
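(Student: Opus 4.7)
The plan is to extract both parts directly from the commutative diagram~\eqref{cdiz} together with the quadric presentation~\eqref{zpqua}.

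For~(a) I would fix $z\in\zp$ and let $\mb x\in P$ denote the image of~$z$ under the $\T^m$-quotient projection $\zp\to P$. Commutativity of~\eqref{cdiz} gives $\mu(i_Z(z))=i_P(\mb x)$, so comparing $i$th coordinates yields
\[
  |z_i|^2=\langle\mb a_i,\mb x\rangle+b_i.
\]
Since $F_i=\{\mb x\in P\colon\langle\mb a_i,\mb x\rangle+b_i=0\}$, the $i$th coordinate of $i_Z(z)$ vanishes if and only if $\mb x\in F_i$. This is essentially a one-line unwinding of the diagram.

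For~(b), suppose the $i$th inequality in~\eqref{ptope} is redundant, so by definition $F_i=\varnothing$. The first thing to observe is that $\langle\mb a_i,\cdot\rangle+b_i$ is then nonnegative on $P$ (since $P$ satisfies this inequality) and nowhere zero on $P$ (else that point would lie in $F_i$), hence strictly positive on~$P$. Let $P'$ denote the polytope presented by the $m-1$ remaining inequalities; as subsets of $\R^n$ we have $P'=P$, and $\mathcal Z_{P'}\subset\C^{m-1}$ is cut out by the $m-1$ real quadrics of~\eqref{zpqua} indexed by $k\ne i$ (after choosing a corresponding relation matrix). I would then consider the projection $\C^m\to\C^{m-1}$ forgetting the $i$th coordinate. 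By part~(a) it restricts to a smooth surjection $\zp\to\mathcal Z_{P'}$ whose fibre over each point is the circle
\[
  \bigl\{z_i\in\C\colon|z_i|^2=\langle\mb a_i,\mb x\rangle+b_i\bigr\},
\]
never degenerating to a point thanks to the strict positivity above. The assignment
\[
  z\longmapsto\bigl((z_1,\ldots,\widehat{z_i},\ldots,z_m),\,z_i/|z_i|\bigr)
\]
trivializes this circle bundle globally, identifying $\zp$ with $\mathcal Z_{P'}\times S^1$.

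The only step that requires any care is the one in~(b) that upgrades $\langle\mb a_i,\mb x\rangle+b_i\ge 0$ to \emph{strict} positivity on~$P$ under the hypothesis $F_i=\varnothing$; without this, the would-be circle fibre could collapse at some boundary points and the bundle would fail to be trivial (or even to be a fibre bundle at all). Everything else is a direct read-off from~\eqref{cdiz} and~\eqref{zpqua}.
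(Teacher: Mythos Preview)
Your argument is correct and is precisely the straightforward unwinding of diagram~\eqref{cdiz} that the paper intends: the paper gives no explicit proof at all, remarking only that both statements ``easily follow from its construction.'' Your care in part~(b) to upgrade $\langle\mb a_i,\cdot\rangle+b_i\ge0$ to strict positivity on~$P$ before trivialising the circle bundle is exactly the detail that makes the informal claim rigorous.
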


\begin{theorem}[{\cite[Cor.~3.9]{bu-pa02}}]\label{zpsmooth}
$\zp$ is a smooth manifold of
dimension $m+n$. Moreover, the embedding $i_Z\colon\zp\to\C^m$ has
$\T^m$-equivariantly trivial normal bundle; a $\T^m$-framing is
determined by any choice of matrix $\varGamma$ in~\eqref{iprn}.
\end{theorem}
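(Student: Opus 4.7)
The plan is to exhibit $\zp$ as a regular level set of the map $F\colon\C^m\to\R^{m-n}$ with components $f_j(\mb z)=\sum_{k=1}^m\gamma_{jk}|z_k|^2-\sum_{k=1}^m\gamma_{jk}b_k$, $j=1,\ldots,m-n$. By~\eqref{zpqua}, $\zp$ is precisely $F^{-1}(\mathbf 0)$, so the hard part is to verify that $dF$ has full rank $m-n$ at every $\mb z\in\zp$. Once that is established, the implicit function theorem gives smoothness and the dimension count $2m-(m-n)=m+n$, and the $m-n$ gradients $\nabla f_j$ provide a frame of the normal bundle.

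To compute the rank of $dF$ at a point $\mb z\in\zp$, set $I=\{k:z_k=0\}$ and $\bar I=\{1,\ldots,m\}\setminus I$. The $k$th real coordinate block of $\nabla f_j$ equals $2\gamma_{jk}(x_k,y_k)$, which vanishes for $k\in I$ and is a nonzero multiple of $\gamma_{jk}$ for $k\in\bar I$. Hence the rank of the Jacobian equals the rank of the submatrix $\varGamma_{\bar I}$ of $\varGamma$ obtained by keeping the columns indexed by $\bar I$. The problem therefore reduces to a purely linear-algebraic statement about the Gale-type pairing between the rows $\mb a_i$ of $A_P$ and the columns $\mb\gamma^k$ of $\varGamma$: since the rows of $\varGamma$ form a basis for linear relations among the $\mb a_k$'s, the columns $\{\mb\gamma^k:k\in\bar I\}$ span $\R^{m-n}$ if and only if the vectors $\{\mb a_k:k\in I\}$ are linearly independent.

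This is where I expect the main subtlety. The linear-algebraic equivalence is standard (dually, removing the columns $\mb\gamma^k$ for $k\in I$ from $\varGamma$ corresponds to quotienting the row space of $A_P$ by the span of $\{\mb a_k:k\in I\}$), but one has to connect it to the geometry. The link is Proposition~\ref{easyzp}(a): the point $\mu(\mb z)\in P$ lies in $\bigcap_{k\in I}F_k$, so this intersection is a nonempty face of~$P$; because $P$ is simple, the normals $\{\mb a_k:k\in I\}$ to these facets are linearly independent. Consequently $\varGamma_{\bar I}$ has full rank $m-n$, and $\zp$ is smooth of dimension $m+n$.

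Finally, for the equivariant framing, identify $T\C^m\cong\C^m$ and observe that $\nabla f_j(\mb z)=2(\gamma_{j1}z_1,\ldots,\gamma_{jm}z_m)$. Under the coordinatewise action $\mb z\mapsto\mb t\cdot\mb z$ we therefore have $\nabla f_j(\mb t\cdot\mb z)=\mb t\cdot\nabla f_j(\mb z)$, so $\nabla f_1,\ldots,\nabla f_{m-n}$ are $\T^m$-equivariant sections of the normal bundle. They are pointwise linearly independent by the previous step, hence they give a $\T^m$-equivariant trivialization $N_{\zp/\C^m}\cong\zp\times\R^{m-n}$, with the frame manifestly determined by~$\varGamma$.
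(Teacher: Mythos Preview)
Your proof is correct and follows essentially the same route as the paper: both reduce smoothness to the transversality of the quadrics~\eqref{zpqua}, compute the gradient matrix, observe that its rank at $\mb z$ equals the rank of the submatrix of $\varGamma$ with the zero-coordinate columns deleted, and then use Proposition~\ref{easyzp}(a) together with simplicity of~$P$ to conclude that the relevant $\mb a_k$'s are independent, forcing this submatrix to have full rank. The only minor differences are presentational: the paper proves the linear-algebra duality (columns of $\varGamma_{\bar I}$ span $\R^{m-n}$ iff $\{\mb a_k:k\in I\}$ is independent) by an explicit diagram chase rather than invoking it as a standard Gale-type fact, and you spell out the $\T^m$-equivariance of the gradient frame more explicitly than the paper does.
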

\begin{proof}
All assertions will follow from the fact that the intersection of
quadrics~\eqref{zpqua} defining $i_Z(\zp)$ is nondegenerate
(transverse). For simplicity we identify $\zp$ with its image
$i_Z(\zp)\subset\C^m$. The gradients of the $m-n$ quadratic forms
in~\eqref{zpqua} at a point $\mb z=(x_1,y_1,\ldots,x_m,y_m)\in\zp$
are
\begin{equation}\label{grve}
  2\left(\gamma_{j1}x_1,\,\gamma_{j1}y_1,\,\dots,\,\gamma_{jm}x_m,\,\gamma_{jm}y_m\right),\quad
  1\le j\le m-n.
\end{equation}
These gradients form the rows of the $(m-n)\times 2m$ matrix
$2\varGamma\varDelta$, where
\[
\varDelta\;=\;
\begin{pmatrix}
  x_1&y_1& \ldots & 0  &0 \\
  \vdots & \vdots & \ddots & \vdots & \vdots\\
  0  & 0 & \ldots &x_m &y_m
\end{pmatrix}.
\]
Assume that for the chosen point $\mb z\in\zp$ we have
$z_{j_1}=\ldots=z_{j_k}=0$, and the other complex coordinates are
nonzero. Then the rank of the gradient matrix
$2\varGamma\varDelta$ at $\mb z$ is equal to the rank of the
$(m-n)\times(m-k)$ matrix $\varGamma'$ obtained by deleting the
columns $j_1,\ldots,j_k$ from~$\varGamma$.

By Proposition~\ref{easyzp}~(a), the point $\mb z$ projects onto
$\mb x\in F_{j_1}\cap\ldots\cap F_{j_k}\ne\varnothing$. Let
$\iota\colon\R^{m-k}\rightarrow\R^m$ be the inclusion of the
coordinate subspace $\{\mb y\colon y_{j_1}=\ldots=y_{j_k}=0\}$,
and $\kappa\colon\R^m\rightarrow\R^k$ the projection onto the
quotient space. Then $\varGamma'=\varGamma\cdot\iota$, and
$A'=\kappa\cdot A_P$ is the $k\times n$ matrix formed by the row
vectors $\mb a_{j_1},\ldots,\mb a_{j_k}$. These vectors are
linearly independent by the assumption, and therefore $A'$ is of
rank~$k$ and $\kappa\cdot A_P$ is epic. We claim that
$\varGamma'=\varGamma\cdot\iota$ is also epic. Indeed, consider
the diagram
\[
\begin{CD}
  @.@.\begin{array}{c}0\\ \raisebox{2pt}{$\downarrow$}\\ \R^n\end{array}@.@.\\
  @.@.@VVA_PV@.@.\\
  0@>>>\R^{m-k}@>\iota>>\R^m@>\kappa>>\R^k @>>>0\\
  @.@.@VV \varGamma V@.@.\\
  @.@.\begin{array}{c}\R^{m-n}\\ \downarrow\\ 0\end{array}@.@.\\
\end{CD}
\]
Take $\alpha\in\R^{m-n}$. There is $\beta\in\R^m$ such that
$\varGamma\beta=\alpha$. Assume $\beta\notin\iota(\R^{m-k})$, then
$\gamma:=\kappa(\beta)\ne0$. Choose $\delta\in\R^n$ such that
$\kappa\cdot A_P(\delta)=\gamma$. Set $\eta:=A_P(\delta)\ne0$.
Hence, $\kappa(\eta)=\kappa(\beta)(=\gamma)$ and there is
$\xi\in\R^{m-k}$ such that $\iota(\xi)=\beta-\eta$. Then
$\varGamma\cdot\iota(\xi)=\varGamma(\beta-\eta)=\alpha-\varGamma\cdot
A_P\delta=\alpha$. Thus, $\varGamma\cdot\iota$ is epic, and
$\varGamma'$ has rank~$m-n$.
\end{proof}

We refer to $\zp$ as the \emph{moment-angle manifold corresponding
to}~$P$, or simply a \emph{polytopal moment-angle manifold}.

There is an indeterminacy in the choice of $\varGamma$. How to
make this choice more canonical?

\begin{construction}[1st method]
By reordering the facets and changing $\R^n$ by a linear
isomorphism we may achieve that the first $n$ facets of $P$ meet
at a vertex and $\mb a_1,\ldots,\mb a_n$ form the standard basis
of~$\R^n$. Then $A_P=\begin{pmatrix}I_n\\A_P^\star\end{pmatrix}$,
where $I_n$ is a unit $n\times n$ matrix, and $A_P^\star$ is an
$(m-n)\times n$ matrix. We may take
$\varGamma=\begin{pmatrix}-A_P^\star&I_{m-n}\end{pmatrix}$. This
choice of $\varGamma$ and the corresponding framing of $\zp$ was
used in~\cite{b-p-r07}.
\end{construction}

\begin{construction}[2nd method]\label{2ndme}
Assume for simplicity that all redundant inequalities
in~\eqref{zpqua} are of the form $1\ge0$ (that is, $\mb
a_i=\mathbf 0$ and $b_i=1$). The rows of the coefficient matrix
$\varGamma=(\gamma_{jk})$ of~\eqref{zpqua} form a basis in the
space of linear relations between the vectors $\mb a_1,\ldots,\mb
a_m$. Since these vectors are determined only up to a positive
multiple, we may assume that $|\mb a_i|=1$ if the $i$th inequality
is irredundant. Since $P$ is a convex polytope, one of the
relations between the $\mb a_i$'s is
$\sum_{i=1}^m(\mathop{\mathrm{vol}}F_i)\mb a_i=\mathbf 0$, where
$\mathop{\mathrm{vol}}F_i\ge0$ is the volume of the facet~$F_i$.
By rescaling the vectors $\mb a_i$ again we may achieve that
$\sum_{i=1}^m\mb a_i=\mathbf 0$ (this still leaves one scaling
parameter free), and choose the coefficients of this relation as
the last row of~$\varGamma$, that is, $\gamma_{m-n,k}=1$ for $1\le
k\le m$. Then the last of the quadrics in~\eqref{zpqua} takes the
form $\sum_{k=1}^m|z_k|^2=\sum_{k=1}^mb_k$. Summing up all $m$
inequalities of~\eqref{ptope}, using the fact that
$\sum_{i=1}^m\mb a_i=\mathbf 0$ and noting that at least one of
the inequalities is strict for some~$\mb x\in\R^n$, we obtain
$\sum_{k=1}^mb_k>0$. Using up the last free scaling parameter we
achieve that $\sum_{k=1}^mb_k=1$. Then the last quadric
in~\eqref{zpqua} becomes $\sum_{k=1}^m|z_k|^2=1$. Subtracting this
equation with an appropriate coefficient from the other equations
in~\eqref{zpqua} we finally transform~\eqref{zpqua} to
\begin{equation}\label{bmlink}
  \left\{\begin{array}{ll}
  \mb z\in\C^m\colon&\sum_{k=1}^m\gamma_{jk}|z_k|^2=0,\;\text{ for }1\le j\le m-n-1,\\[2mm]
  &\sum_{k=1}^m|z_k|^2=1.
  \end{array}\right\}
\end{equation}
We therefore have written $\zp$ as a transverse intersection of
$m-n-1$ \emph{homogeneous} quadrics with a unit sphere in~$\C^m$.
Such intersections were called \emph{links} in~\cite{bo-me06}. We
denote by $\varGamma^\star$ the submatrix of $\varGamma$ formed by
$\gamma_{jk}$ with $j\le m-n-1$, and denote the columns of
$\varGamma^\star$ by $\mb g_1,\ldots,\mb g_m$, so that
$\varGamma=\begin{pmatrix}\mb g_1&\ldots&\mb
g_m\\1&\ldots&1\end{pmatrix}$.

The identity $\varGamma A_P=0$ also means that the columns of
$A_P$ form a basis in the space of linear relations between the
columns of $\varGamma$, that is, a basis of solutions of the
homogeneous system
\begin{equation}\label{homsyst}
  \sum_{k=1}^m\mb g_ky_k=\textbf 0,\quad\sum_{k=1}^my_k=0.
\end{equation}
The passage from the vectors $\mb g_1,\ldots,\mb g_m\in\R^{m-n-1}$
to the vectors $\mb a_1,\ldots,\mb a_m\in\R^n$ forming the
transpose to a basis of solutions of~\eqref{homsyst} is known in
convex geometry as the \emph{Gale transform}.
\end{construction}

\begin{example}
Let $P$ be the standard $n$-simplex given by the equations
$x_i\ge0$ for $i=1,\ldots,n$ and $-x_1-\ldots-x_n+1\ge0$
in~$\R^n$. We therefore have $m=n+1$, $\mb a_i=\mb e_i$ (the $i$th
standard basis vector) for $i=1,\ldots,n$ and $\mb a_{n+1}=-\mb
e_1-\ldots-\mb e_n$, \
$A_P=\begin{pmatrix}I_n\\-{}1\:\ldots-\!1\end{pmatrix}$, \
$\varGamma=(1\ldots1)$, \ $\varGamma^\star=\varnothing$, and $\zp$
is a unit sphere in~$\C^{n+1}$.
\end{example}

The combinatorial structure of $P$ can be read from the
configuration of vectors $\mb g_1,\ldots,\mb g_m$ in $\R^{m-n-1}$
using the following lemma.

%
%

\begin{lemma}\label{propgd}
Let $P$ be a simple polytope~\eqref{ptope}. The following
conditions are equivalent:
\begin{itemize}
\item[(a)]
$F_{i_1}\cap\ldots\cap F_{i_k}\ne\varnothing$,

\item[(b)]
$\mathbf 0\in\conv\bigr(\mb g_j\colon
j\notin\{i_1,\ldots,i_k\}\bigl)$.
\end{itemize}
Here $\conv(\cdot)$ denotes the convex hull of a set of points.
\end{lemma}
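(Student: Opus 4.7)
The plan is to reformulate condition (a) as a linear feasibility statement inside the affine plane $i_P(\R^n)\subset\R^m$ and then to read it off using the normalized shape of $\varGamma$ from Construction~\ref{2ndme}. Set $I:=\{i_1,\ldots,i_k\}$ throughout.

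The starting observation is that for $\mb x\in\R^n$ the $i$th coordinate of $i_P(\mb x)=A_P\mb x+\mb b_P$ equals $\langle\mb a_i,\mb x\rangle+b_i$, so $\mb x\in F_i$ iff this coordinate vanishes. Hence (a) is equivalent to the existence of $\mb y\in\R^m_\ge$ with $y_j=0$ for every $j\in I$ and $\mb y\in i_P(\R^n)$; by~\eqref{iprn} the last condition reads $\varGamma\mb y=\varGamma\mb b_P$.

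Next I invoke the normalized $\varGamma$ from Construction~\ref{2ndme}: its $k$th column is $\binom{\mb g_k}{1}$, and by the very design of the row operations which turn~\eqref{zpqua} into~\eqref{bmlink} one has $\varGamma\mb b_P=\binom{\mathbf 0}{1}$. The feasibility problem above therefore becomes: find $\mu_j\ge 0$ for $j\notin I$ satisfying
\[
  \sum_{j\notin I}\mu_j\mb g_j=\mathbf 0\qquad\text{and}\qquad\sum_{j\notin I}\mu_j=1,
\]
which is exactly the assertion $\mathbf 0\in\conv(\mb g_j\colon j\notin I)$ of (b). Matching $\mu_j$ with $y_j$ ($j\notin I$) and recalling $y_j=0$ for $j\in I$ gives the equivalence in both directions.

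The only non-formal step is verifying the identity $\varGamma\mb b_P=\binom{\mathbf 0}{1}$ in the normalized form: the last row $(1,\ldots,1)$ pairs with $\mb b_P$ to yield the scaling $\sum_k b_k=1$, while the first $m-n-1$ rows were specifically arranged by subtracting appropriate multiples of the last row so as to zero out the right-hand sides of~\eqref{zpqua} (row operations which preserve both the affine plane $\varGamma\mb y=\varGamma\mb b_P$ and the identity $\varGamma A_P=0$, since $\sum_k\mb a_k=\mathbf 0$). This small bookkeeping is the only subtlety; after it the equivalence (a)$\Leftrightarrow$(b) is a direct linear-algebraic translation.
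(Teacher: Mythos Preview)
Your proof is correct and follows essentially the same route as the paper's: translate (a) into the existence of a vector $\mb y\in\R^m_\ge$ with $y_j=0$ for $j\in I$ lying on the affine plane $i_P(\R^n)=\{\mb y:\varGamma\mb y=\varGamma\mb b_P\}$, then use the normalized $\varGamma$ of Construction~\ref{2ndme} (columns $\binom{\mb g_k}{1}$ and right-hand side $\binom{\mathbf 0}{1}$) to read this off as the convex-combination condition~(b). The paper argues identically, only presenting the direction (a)$\Rightarrow$(b) via $\varGamma^\star\mb y=\mathbf 0$ and leaving the normalization $\sum_{j\notin I}y_j=1$ implicit; you make both the normalization and the verification $\varGamma\mb b_P=\binom{\mathbf 0}{1}$ explicit, which is a welcome bit of bookkeeping but not a different idea.
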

\begin{proof}
Given $\mb x\in P$, set $\mb y=i_P(\mb x)$ and $y_i=\langle\mb
a_i,\mb x\rangle+b_i$ for $1\le i\le m$. Assume (a) is satisfied.
Choose $\mb x\in F_{i_1}\cap\ldots\cap F_{i_k}$;
then $y_j=0$ for $j\in\{i_1,\ldots,i_k\}$ and $y_j\ge0$ for
$j\notin\{i_1,\ldots,i_k\}$. Now $\varGamma^\star\mb y=\mathbf 0$
implies
\[
  \mb g_1y_1+\ldots+\mb
  g_my_m=\sum_{j\notin\{i_1,\ldots,i_k\}}\mb g_j y_j=\mathbf 0,
\]
which is equivalent to condition~(b). Conversely, if (b) is
satisfied, then there is $\mb y\in\R^m$ with $y_j=0$ for
$j\in\{i_1,\ldots,i_k\}$ and
$y_j\ge0$ for $j\notin\{i_1,\ldots,i_k\}$ such that $\varGamma\mb
y=\begin{pmatrix}\mathbf 0\\1\end{pmatrix}$. Therefore, $\mb
y=i_P(\mb x)$ for some $\mb x\in P$. For this $\mb x$ we have
$\langle\mb a_j,\mb x\rangle+b_j=0$ if $j\in\{i_1,\ldots,i_k\}$,
which implies that $\mb x\in F_{i_1}\cap\ldots\cap F_{i_k}$.
\end{proof}


The \emph{polar set} of a polyhedron given by~\eqref{ptope} is
defined as
\[
  P^*:=\bigl\{\mb u\in\R^n\colon\langle\mb u,\mb x\rangle\ge-1
  \quad\text{for all }\mb x\in P\bigr\}.
\]
If $P$ is a convex polytope and $\mathbf 0$ is in the interior of
$P$ (which implies that $b_i>0$ for all~$i$), then $P^*$ is also a
convex polytope given by
\[
  P^*=\conv\Bigl(\frac{\mb a_1}{b_1},\ldots,\frac{\mb a_m}{b_m}\Bigr),
\]
where $\conv(\cdot)$ denotes the convex hull of a set of points.
$P^*$ is called the \emph{polar} (or \emph{dual}) \emph{polytope}
of~$P$. If $P$ is simple, then $P^*$ is \emph{simplicial} (all
proper faces are simplices), and vice versa. In this case we have
that $F_{i_1}\cap\ldots\cap F_{i_k}$ is an $(n-k)$-face of~$P$ if
and only if $\conv\bigl(\frac{\mb
a_{i_1}}{b_{i_1}},\ldots,\frac{\mb a_{i_k}}{b_{i_k}}\bigr)$ is a
$k$-face of~$P^*$.

A set of vectors $\mb g_1,\ldots,\mb g_m\in\R^{m-n-1}$ satisfying
the conditions of Lemma~\ref{propgd} is called a \emph{Gale
diagram} of~$P^*$.

\section{From quadrics to polytopes}\label{fqtp}
Here we briefly review a construction of~\cite{bo-me06} which
associates a simple polytope to every link~\eqref{bmlink}.

For every set of vectors $\mb g_1,\ldots,\mb g_m\in\R^{m-n-1}$
there is the associated link:
\[
  \mathcal L:=\left\{\begin{array}{ll}
  \mb z\in\C^m\colon&\sum_{k=1}^m\mb g_k|z_k|^2=\textbf 0,\\[2mm]
  &\sum_{k=1}^m|z_k|^2=1.
  \end{array}\right\}
\]
We observe that if the set $\mb g_1,\ldots,\mb g_m$ is obtained
from a simple polytope~\eqref{ptope} as described in
Construction~\ref{2ndme}, and therefore $\mathcal L$ is an
embedding of the corresponding moment-angle manifold~$\zp$, then
the following two conditions are satisfied:
\begin{itemize}
\item[(i)] $\textbf 0\in\conv(\mb g_1,\ldots,\mb g_m)$;
\item[(ii)] if $\textbf 0\in\conv(\mb g_{i_1},\ldots,\mb g_{i_k})$ then $k\ge
m-n$.
\end{itemize}
(Indeed, (i) expresses the fact that $\mathcal L$ is nonempty,
and~(ii) easily follows from Lemma~\ref{propgd}.) The converse
statement also holds:

\begin{proposition}[{\cite[Lemma~0.12]{bo-me06}}]
Let $\mathcal L$ be a link satisfying conditions~{\rm(i)}
and~{\rm(ii)} above. Then there is a simple polytope~\eqref{ptope}
such that $\mathcal L$ is an embedding of the moment-angle
manifold~$\zp$. In particular the intersection of quadrics
defining~$\mathcal L$ is nondegenerate.
\end{proposition}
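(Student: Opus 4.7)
The plan is to reverse the construction of Construction~\ref{2ndme}: given vectors $\mb g_1,\ldots,\mb g_m\in\R^{m-n-1}$ cutting out $\mathcal L$, I shall produce inequality data $(\mb a_i,b_i)$ whose polytope $P$ satisfies $\mathcal L=\zp$. Assemble the $(m-n)\times m$ matrix
\[
  \varGamma=\begin{pmatrix}\mb g_1&\ldots&\mb g_m\\ 1&\ldots&1\end{pmatrix},
\]
take $A_P$ to be an $m\times n$ matrix whose columns form a basis of $\ker\varGamma$, let $\mb a_i$ be the $i$th row of $A_P$, and let $\mb b_P=(\lambda_1,\ldots,\lambda_m)$ be the non-negative coefficient vector supplied by condition~(i), so that $\varGamma\mb b_P=(\mathbf 0,1)^{\!T}$. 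One must first check that $\varGamma$ has full row rank $m-n$: applying Carath\'eodory to~(i) and then minimising with~(ii) produces a subset $\{\mb g_{i_1},\ldots,\mb g_{i_{m-n}}\}$ that is affinely independent and has $\mathbf 0$ in its convex hull, and such a configuration affinely spans~$\R^{m-n-1}$, which is equivalent to linear independence of the rows of $\varGamma$.

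Next I verify that the resulting polytope $P=\{\mb x\in\R^n\colon A_P\mb x+\mb b_P\ge\mathbf 0\}$ is simple, bounded, and $n$-dimensional. Boundedness: a recession direction $\mb v$ would yield $A_P\mb v\ge\mathbf 0$ with $A_P\mb v\in\ker\varGamma$, so pairing against the last row of $\varGamma$ forces $\sum_k(A_P\mb v)_k=0$, hence $A_P\mb v=\mathbf 0$, and injectivity of $A_P$ gives $\mb v=\mathbf 0$. The Carath\'eodory subset above also produces a non-negative $\mb y\in\R^m$ with $\varGamma\mb y=(\mathbf 0,1)^{\!T}$ and exactly $n$ zero coordinates, whose preimage $\mb x\in P$ lies on precisely those $n$ facets; the corresponding $n$ normals are linearly independent because the projection $\ker\varGamma\to\R^n$ onto these coordinates has trivial kernel (any vector there would provide a non-trivial affine dependence among the affinely independent $\mb g_{i_\ell}$), so $\mb x$ is a simple vertex and $\dim P=n$. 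Simplicity of $P$ globally is then Lemma~\ref{propgd} in contrapositive form: if $n+1$ facets shared a common point we would have $\mathbf 0\in\conv(\mb g_j\colon j\notin\{i_1,\ldots,i_{n+1}\})$ for a set of size $m-n-1$, contradicting~(ii).

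Substituting these choices of $\varGamma$ and $\mb b_P$ into~\eqref{zpqua} recovers~\eqref{bmlink} verbatim, so $\mathcal L=\zp$ as subsets of~$\C^m$; nondegeneracy of the resulting intersection of quadrics is then provided by Theorem~\ref{zpsmooth}. The main obstacle I anticipate is the dimension argument: extracting a genuine \emph{simple vertex} of $P$ rather than merely a point---and thereby ruling out that $P$ might be lower-dimensional or have empty interior---is where conditions~(i) and~(ii) must be carefully combined through Carath\'eodory, and it is here that the convex-geometric hypotheses on the $\mb g_i$'s translate into the combinatorial data of a simple polytope.
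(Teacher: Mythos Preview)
Your proposal is correct and follows essentially the same route as the paper: both reverse Construction~\ref{2ndme} by taking the Gale transform of the $\mb g_i$'s to produce the inequality data $(\mb a_i,b_i)$, identify the polytope as the nonnegative slice of the affine plane $\{\varGamma\mb y=(\mathbf 0,1)^T\}$, and then invoke Lemma~\ref{propgd} together with condition~(ii) for simplicity. The differences are only in the level of detail. The paper obtains the rank of $\varGamma$ by citing \cite[Lemma~0.3]{bo-me06} and obtains boundedness from compactness of~$\mathcal L$, whereas you supply self-contained arguments via Carath\'eodory and the last row of~$\varGamma$; conversely, the paper is terser on why $\dim Q=n$, a point you handle more carefully by exhibiting a simple vertex. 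One small remark: Lemma~\ref{propgd} is stated for simple polytopes, so strictly speaking you (and the paper) are invoking only its (a)$\Rightarrow$(b) direction, whose proof does not use simplicity.
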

\begin{proof}
The torus $\T^m$ acts on $\mathcal L$ coordinatewise, and the
quotient $\mathcal L/\T^m$ is the set $Q$ of nonnegative solutions
of the system
\begin{equation}\label{qsyst}
  \sum_{k=1}^m\mb g_ky_k=\textbf 0,\quad\sum_{k=1}^my_k=1.
\end{equation}
Condition (ii) implies that this system has maximal
rank~$m-n$~\cite[Lemma~0.3]{bo-me06}, and therefore $Q$ is the
intersection of an $n$-dimensional affine plane with~$\R^m_\ge$.
Since $Q$ is bounded (as $\mathcal L$ is compact) and contains a
nonzero point $\mb y$ (by condition~(i)), it is a convex
$n$-dimensional polytope. Writing a general solution of
system~\eqref{qsyst} we obtain a presentation~\eqref{ptope} for
the polytope~$Q$, with $\mb a_1,\ldots,\mb a_m$ being a Gale
transform of $\mb g_1,\ldots,\mb g_m$ and $\sum_{k=1}^mb_k=1$.
This implies that $\mb g_1,\ldots,\mb g_m$ is a Gale diagram of
the dual polytope~$Q^*$. Now condition~(ii) and Lemma~\ref{propgd}
imply that at most $n$ facets of $Q$ may meet, hence $Q$ is
simple.
\end{proof}

\begin{example}\label{prodsimex}
Let $m-n-1=1$, so that $\mb g_i\in\R$. Condition~(ii) implies that
all $\mb g_i$ are nonzero; assume that there are $p$ positive and
$q=m-p$ negative numbers among them. Then condition~(i) implies
that $p>0$ and $q>0$. Therefore $\mathcal L$ is diffeomorphic to
\[
  \left\{\begin{array}{ll}
  \mb z\in\C^m\colon&|z_1|^2+\ldots+|z_p|^2-|z_{p+1}|^2-\ldots-|z_m|^2=0,\\[1mm]
  &|z_1|^2+\ldots+|z_m|^2=1.
  \end{array}\right\}
\]
The first equation specifies a cone over $S^{2p-1}\times
S^{2q-1}$, so that $\mathcal L\cong S^{2p-1}\times S^{2q-1}$. The
corresponding polytope is either a simplex $\Delta^{m-2}$ with one
redundant inequality (if $p=1$ or $q=1$) or a product
$\Delta^{p-1}\times\Delta^{q-1}$.
\end{example}

\newpage

\part{Other constructions of the moment-angle manifold}
In the previous lecture we defined the moment-angle manifold $\zp$
corresponding to a simple polytope~$P$, and embedded it into
$\C^m$ as a nondegenerate intersection of $m-n$ real quadrics.
There are several other constructions of the moment-angle
manifold, some of which admit interesting generalisations and lead
to new applications.

\section{Identification space} Denote by
$[m]$ the $m$-element set $\{1,\ldots,m\}$. For any subset
$I\subset[m]$ define the corresponding \emph{coordinate subgroup}
$T^I$ in the torus $\T^m$ as
\begin{equation}\label{tisub}
  T^I:=\{\mb t=(t_1,\ldots,t_m)\in\T^m\colon t_j=1\text{ for
  }j\notin I\}.
\end{equation}
In particular, $T^\varnothing$ is the trivial subgroup~$\{1\}$.

We consider the map $\R_\ge\times\T\to\C$ defined by $(y,t)\mapsto
yt$. Taking product we obtain a map $\R^m_\ge\times\T^m\to\C^m$.
The preimage of a point $\mb z\in\C^m$ under this map is $\mb
y\times T^{I(\mb z)}$, where $y_i=|z_i|$ for $1\le i\le m$ and
$I(\mb z)\subset[m]$ is the set of zero coordinates of~$\mb z$.
Therefore, $\C^m$ can be identified with the quotient space
\[
  \R^m_\ge\times\T^m/{\sim\:}\quad\text{where }(\mb y,\mb t_1)\sim(\mb y,\mb
  t_2)\text{ if }\mb t_1^{-1}\mb t_2\in T^{I(\mb y)}.
\]

The space $\zp$ was originally defined in~\cite{da-ja91} as a
similar identification space. Given $p\in P$, set
$I_p=\{i\in[m]\colon p\in F_i\}$ (the set of facets
containing~$p$).

\begin{proposition}
The moment-angle manifold $\zp$ is $\T^m$-equivariantly
homeomorphic to the quotient
\[
  P\times\T^m/{\sim\:}\quad\text{where }(p,\mb t_1)\sim(p,\mb t_2)\:\text{ if }\:\mb t_1^{-1}\mb t_2\in
  T^{I_p}.
\]
\end{proposition}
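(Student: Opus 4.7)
The plan is to leverage the identification $\C^m\cong\R^m_\ge\times\T^m/\!\sim$ just described, pulling it back along the defining diagram~\eqref{cdiz} so that the fibres of $\zp\to P$ become exactly the $\sim$-classes on $P\times\T^m$.

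Concretely, I would define a candidate map
$$\psi\colon P\times\T^m\to\C^m,\qquad \psi(p,\mathbf t)=\bigl(\sqrt{y_1(p)}\,t_1,\ldots,\sqrt{y_m(p)}\,t_m\bigr),$$
where $y_i(p)=\langle\mb a_i,p\rangle+b_i\ge0$ are the coordinates of $i_P(p)$. Applying $\mu$ gives $i_P(p)$, so diagram~\eqref{cdiz} places the image inside $i_Z(\zp)$; equivariance in the second variable is tautological.

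The heart of the proof is checking that $\psi$ descends to a bijection $P\times\T^m/\!\sim\;\to\zp$. Well-definedness on $\sim$-classes is immediate because the factor $\sqrt{y_i(p)}$ kills exactly the torus coordinates indexed by $I_p$. Injectivity reduces, after taking moduli, to injectivity of the affine map $i_P$ (which holds because $A_P$ has rank~$n$ by the general position assumption), followed by cancellation of the nonzero $\sqrt{y_i(p)}$. Surjectivity uses that every $\mathbf z\in\zp$ projects to a unique $p\in P$ and, by Proposition~\ref{easyzp}(a), satisfies $I(\mathbf z)=I_p$; the torus coordinates $t_i$ are then read off from the phases of the nonzero $z_i$.

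Finally, the induced map is continuous because $\psi$ is, and since $P\times\T^m/\!\sim$ is compact while $\zp\subset\C^m$ is Hausdorff, any continuous bijection between them is automatically a homeomorphism. The one nontrivial input is Proposition~\ref{easyzp}(a), which is precisely what matches the combinatorial relation $\sim$ on the $P$-side with the torus stabilisers on the $\C^m$-side; I expect this matching of equivalence classes to be the step requiring the most care.
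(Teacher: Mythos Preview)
Your proof is correct and follows essentially the same approach as the paper, which compresses the argument to one line by invoking the identification $\C^m\cong\R^m_\ge\times\T^m/{\sim}$ and noting that $I_p=I(i_P(p))$. Your explicit map $\psi$ simply unpacks this identification (with the square root accounting for the fact that $\mu$ uses $|z_i|^2$ rather than $|z_i|$), and your verification of bijectivity and the compact--Hausdorff argument are exactly what lies behind the paper's terse ``it follows from~\eqref{cdiz}''.
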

\begin{proof}
It follows from~\eqref{cdiz} that $\zp$ is $\T^m$-homeomophic to
$i_P(P)\times\T^m/{\sim\:}$, and a point $p\in P$ is mapped by
$i_P$ to $\mb y\in\R^m_\ge$ with $I_p=I(\mb y)$.
\end{proof}

\begin{corollary}
The $\T^m$-equivariant topological type of the manifold $\zp$
depends only on the combinatorial type (the face poset) of the
polytope~$P$.
\end{corollary}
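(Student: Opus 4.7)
The plan is to deduce the corollary directly from the identification-space description of $\zp$ proved in the previous proposition. The key observation is that the recipe $P\times\T^m/{\sim}$ uses only two pieces of data: the topological space $P$ and the assignment $p\mapsto I_p$ that records which facets contain~$p$. Both of these, I claim, are determined by the face poset of~$P$ alone, and the quotient construction is functorial enough that an isomorphism of face posets will induce a $\T^m$-equivariant homeomorphism of the quotients.

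First I would make precise the well-known fact that two simple polytopes $P$ and $P'$ with isomorphic face posets are homeomorphic by a face-preserving map. The face poset isomorphism gives a bijection $\sigma$ between the facets of $P$ and those of $P'$, so (after relabelling) we may take both index sets to be $[m]$, with $I_p=I_{\sigma(p)}$ for corresponding points. To construct an actual homeomorphism $\phi\colon P\to P'$ that realises this, I would proceed by induction on the dimension: each face of a simple polytope is a ball, so one can extend face-preserving homeomorphisms from the boundary to the interior using the standard cone construction on each face. The inductive step uses that the boundary of a $k$-face is a $(k-1)$-sphere tiled by its subfaces, and that any face-preserving homeomorphism of the boundary extends radially. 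The output is a homeomorphism $\phi\colon P\to P'$ satisfying $I_p=I_{\phi(p)}$ for every $p\in P$.

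Given such a $\phi$, I would define
\[
  \widetilde\phi\colon P\times\T^m\longrightarrow P'\times\T^m,\qquad
  (p,\mb t)\longmapsto(\phi(p),\mb t).
\]
This is a $\T^m$-equivariant homeomorphism (with $\T^m$ acting trivially on the first factor and by translation on the second), and it respects the equivalence relations on both sides because $I_p=I_{\phi(p)}$ forces $T^{I_p}=T^{I_{\phi(p)}}$. Hence $\widetilde\phi$ descends to a $\T^m$-equivariant homeomorphism of the quotient spaces, which by the previous proposition are exactly $\mathcal Z_P$ and $\mathcal Z_{P'}$.

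The only real obstacle is the face-preserving homeomorphism $\phi\colon P\to P'$; everything else is formal. This step is classical in polytope theory (any two simple polytopes combinatorially equivalent to the same abstract polytope are PL-equivalent via a cellular map), so rather than reproving it from scratch I would either cite it or sketch the inductive ball-extension argument above.
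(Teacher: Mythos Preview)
Your proposal is correct and matches the paper's intended argument: the corollary is stated there without proof, as an immediate consequence of the identification-space proposition, and you have simply spelled out the natural details (the face-preserving homeomorphism between combinatorially equivalent simple polytopes, and the induced equivariant map on quotients). There is nothing to correct or add.
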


\section{Moment-angle complex} We consider the unit polydisc
in $\C^m$:
\[
  \D^m=\bigl\{ (z_1,\ldots,z_m)\in\C^m\colon |z_i|\le1,\quad i=1,\ldots,m
  \bigr\}.
\]
The quotient $\D^m/\T^m$ is the standard \emph{unit $m$-cube}
$\I^m=[0,1]^m$, and we have an identification
$\I^m\times\T^m/{\sim\:}\cong\D^m$ like that considered in the
previous subsection.

\hangindent=28mm \hangafter=2 Let $P$ be an $n$-dimensional
combinatorial simple polytope, and let $V(P)$ be its set of
vertices (it is helpful to have a geometric
presentation~\eqref{ptope} in mind, although only the
combinatorial structure of $P$ is relevant here). Every such $P$
may be represented as a union $\bigcup C_v$ of combinatorial
$n$-cubes, with one such cube $C_v$ for vertex $v\in V(P)$. See
the figure on the left where a pentagon is split into 5
quadrilaterals (combinatorial 2-cubes); the details of this
self-evident construction can be found in~\cite[\S4.2]{bu-pa02}.

\parindent=0.03\textwidth\raisebox{1.5\baselineskip}[0pt]
{%
\begin{picture}(20,20)
  \put(4.5,1.5){\line(-1,3){4.5}}
  \put(0,15){\line(2,1){10}}
  \put(10,20){\line(2,-1){10}}
  \put(20,15){\line(-1,-3){4.5}}
  \put(15.5,1.5){\line(-1,0){11}}
  \put(10,10){\line(-4,-1){7.75}}
  \put(10,10){\line(-2,3){5}}
  \put(10,10){\line(2,3){5}}
  \put(10,10){\line(4,-1){7.75}}
  \put(10,10){\line(0,-1){8.5}}
  \put(2.5,0){\small$v$}
  \put(5.5,5){\small$C_v$}
\end{picture}%
}

\vspace{-\baselineskip}

Every vertex $v\in P$ can be written as an intersection of $n$
facets: $v=F_{i_1}\cap\ldots\cap F_{i_n}$, and we denote
$I_v:=\{i_1,\ldots,i_n\}$. Consider the subset
$C_v\times\T^m/{\sim\:}\subset P\times\T^m/{\sim\:}\cong\zp$. We
have
\[
  C_v\times\T^m/{\sim\:}=
  \bigl(C_v\times T^{I_v}/{\sim\:}\bigr)\times T^{[m]\setminus I_v}
  \cong\D^n\times\T^{m-n}.
\]
We therefore may identify $\zp$ with the union
\begin{equation}\label{zpbv}
  \bigcup_{v\in V(P)} B_v\subset\D^m,
\end{equation}
where
\[
  B_v:=\bigl\{(z_1,\ldots,z_m)\in
  \D^m\colon |z_j|=1\text{ if }v\notin
  F_j\bigl\}\cong\D^n\times\T^{m-n}.
\]

Now let $\mathcal K_P$ be the boundary $\partial P^*$ of the dual
simplicial polytope. It can be viewed as a simplicial complex on
the set $[m]$, whose simplices are subsets $I=\{i_1,\ldots,i_k\}$
such that $F_{i_1}\cap\ldots\cap F_{i_k}\ne\varnothing$ in~$P$.
(Note that a redundant inequality in~\eqref{ptope} whose
corresponding facet $F_i$ is empty gives rise to a \emph{ghost
vertex} of $\mathcal K_P$, i.e. a one-element subset $\{i\}$ of
$[m]$ which is not a vertex of~$\mathcal K_P$.) Then we may
rewrite~\eqref{zpbv} as $\zp\cong\bigcup_{I\in\sK_P}B_I$, where
\[
  B_I:=\bigl\{(z_1,\ldots,z_m)\in
  \D^m\colon |z_j|=1\text{ for }j\notin I\bigl\}.
\]
This construction admits the following generalisation.

\begin{definition}[\cite{bu-pa02}]
Let $\sK$ be a simplicial complex on the set~$[m]$. The
corresponding \emph{moment-angle complex} $\zk$ is defined as
\[
  \zk:=\bigcup_{I\in\sK}B_I\subset\D^m.
\]
\end{definition}
Note that $\dim\zk=m+\dim\sK+1$.

\begin{example}\label{zkexam}
1. Let $\sK=\partial\Delta^n$, the boundary of an $n$-simplex
(which is dual to $P=\Delta^n$) and $m=n+1$. Then
\[
  \zk=(\D{\times}\ldots{\times}\D{\times}\T)\cup(\D{\times}\ldots{\times}
  \T{\times}\D)\cup\ldots\cup(\T{\times}\D{\times}\ldots{\times}\D)=
  \partial\D^{n+1}\cong S^{2n+1}.
\]

2. Let $\sK$ be a disjoint union of $m$ points (this example is
not of the form $\sK_P$ if $m\ne2$). Then $\zk$ is given as the
following $m+1$-dimensional subspace in~$\D^m$:
\[
  \zk=(\D{\times}\T{\times}\ldots{\times}\T)\cup(\T{\times}\D{\times}\ldots{\times}
  \T)\cup\ldots\cup(\T{\times}\T{\times}\ldots{\times}\D).
\]
\end{example}

\begin{lemma}[{\cite[Lemma~6.13]{bu-pa02}}]
If $\sK$ is a triangulation (simplicial subdivision) of an
$(n-1)$-dimensional sphere, then $\zk$ is a closed topological
manifold of dimension~$m+n$.
\end{lemma}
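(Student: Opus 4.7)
The plan is to establish the manifold property pointwise by analyzing a local model for $\zk$ near an arbitrary point $\mb z = (z_1,\ldots,z_m)$. The natural combinatorial label attached to $\mb z$ is
\[
  \sigma := \{i \in [m] : |z_i| < 1\};
\]
since $\mb z \in B_I$ precisely when $\sigma \subseteq I$, the condition $\mb z \in \zk$ is equivalent to $\sigma \in \sK$. I would partition $[m]$ into three blocks: $\sigma$ itself, the vertex set $V_\sigma := \{j \in [m]\setminus\sigma : \sigma\cup\{j\}\in\sK\}$ of $\lk_\sK(\sigma)$, and the remainder $W_\sigma := [m]\setminus(\sigma \cup V_\sigma)$. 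In the $\sigma$-block each $|z_i|$ is strictly less than $1$, so all $m$ real coordinates there can be freely perturbed inside $\D$, contributing a factor $\R^{2|\sigma|}$. In the $W_\sigma$-block, allowing $|z_j|$ to drop below $1$ would force $\sigma \cup \{j\} \in \sK$, which is forbidden by the definition of $W_\sigma$; hence $|z_j|$ is locked at $1$ while $\arg z_j$ varies freely, contributing $\R^{|W_\sigma|}$.

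In the $V_\sigma$-block I would use polar coordinates $z_j = \rho_j e^{i\theta_j}$ with $\rho_j$ close to $1$. The angles $\theta_j$ are unconstrained, giving $\R^{|V_\sigma|}$ more directions, and the radii are subject to the single combinatorial condition $\{j \in V_\sigma : \rho_j < 1\} \in \lk_\sK(\sigma)$, which is exactly the condition defining a neighborhood of the apex in the cone $\cone|\lk_\sK(\sigma)|$. Collecting the three blocks, a neighborhood of $\mb z$ in $\zk$ is homeomorphic to
\[
  \R^{2|\sigma|} \times U_\sigma \times \R^{m - |\sigma|},
\]
where $U_\sigma$ is an open neighborhood of the cone apex in $\cone|\lk_\sK(\sigma)|$.

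The hypothesis that $\sK$ triangulates $S^{n-1}$ enters here: the link of a simplex with $|\sigma|$ vertices in such a triangulation is itself a triangulated sphere of dimension $n - |\sigma| - 1$, and the cone on any topological sphere $S^k$ is homeomorphic to the closed disk $D^{k+1}$ with the apex mapping to an interior point. Therefore $U_\sigma$ is homeomorphic to an open subset of $\R^{n-|\sigma|}$, and the total local dimension tallies up to $2|\sigma| + (n - |\sigma|) + (m - |\sigma|) = m + n$, as required. Hausdorffness and second countability descend from $\C^m$, and compactness is immediate from $\zk \subseteq \D^m$ together with the finiteness of $\sK$, so $\zk$ is a closed topological manifold. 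The main technical point is setting up the radial/angular decomposition cleanly so that the cone-over-link identification is canonical and keeping track of which coordinates contribute which type of factor; once this is done, everything reduces to the elementary fact $\cone S^k \cong D^{k+1}$, together with the sphericity of links in a sphere triangulation, which is precisely what prevents any exotic local behaviour.
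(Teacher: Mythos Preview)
The paper gives no proof here, only the citation to \cite[Lemma~6.13]{bu-pa02}; your local analysis---labelling a point by the simplex $\sigma$ of coordinates in the open disc, separating the remaining indices into those that may or may not leave $\partial\D$, and recognising the constrained radial directions over $V_\sigma$ as a neighbourhood of the apex in $\cone|\lk_\sK(\sigma)|$---is precisely the standard argument and matches the proof in the cited source. The bookkeeping (the three blocks, the dimension count, compactness from $\zk\subset\D^m$) is all correct.

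One technical point deserves a flag. The step ``the link of a simplex with $|\sigma|$ vertices in such a triangulation is itself a triangulated sphere of dimension $n-|\sigma|-1$'' is valid when $\sK$ is a PL sphere, but not for an arbitrary simplicial complex with $|\sK|\cong S^{n-1}$: by the Edwards--Cannon double suspension theorem, the twofold suspension of a triangulated Poincar\'e homology $3$-sphere is homeomorphic to $S^5$, yet contains edges whose link is the Poincar\'e sphere rather than~$S^3$. Since~\cite{bu-pa02} and the present notes effectively work in the PL setting (and all polytopal and starshaped sphere triangulations are PL), your argument is complete for the intended statement; covering the exotic non-PL case would require replacing the ``cone on a sphere is a disc'' step by a more careful use of the fact that $\R^{|\sigma|-1}\times\cone|\lk_\sK(\sigma)|$ is the open star of $\sigma$ in $|\sK|$, hence already an open subset of~$S^{n-1}$, together with a stabilisation argument.
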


\begin{remark}
If $\sK=\sK_P$ for a simple polytope~$P$, then $\zk\cong\zp$ and
therefore the manifold $\zk$ can be smoothed by
Theorem~\ref{zpsmooth}. However, there are many sphere
triangulations $\sK$ which are not of the form $\sK_P$ for
any~$P$. The question of whether the corresponding $\zk$ can be
smoothed is open in general (see~\cite{pa-us10} for a construction
of smooth structures on some nonpolytopal~$\zk$).
\end{remark}

\section{Coordinate subspace arrangement complement}\label{csac} A
\emph{coordinate subspace} in $\C^m$ is determined by a subset
$I=\{i_1,\ldots,i_m\}\subset[m]$:
\[
  L_I:=\{\mb z\in\C^m\colon z_{i_1}=\ldots=z_{i_k}=0\}.
\]
Every simplicial complex $\sK$ on $[m]$ defines an
\emph{arrangement} of coordinate subspaces in~$\C^m$ and its
\emph{complement}
\begin{equation}\label{uset}
  U(\sK):=\C^m\Big\backslash\bigcup_{I\notin\sK}L_I.
\end{equation}

\begin{example}\label{complexam}
1. If $\sK=\partial\Delta^{m-1}$ then $U(\sK)=\C^m\setminus\{\mb
z\colon z_1=\ldots=z_m=0\}=\C^m\setminus\{\mathbf0\}$.

2. If $\sK$ is a disjoint union of $m$ points, then
\[
  U(\sK)=\C^m\Big\backslash\bigcup_{1\le i<j\le m}
  \bigl\{\mb z\colon z_i=z_j=0\bigr\}
\]
is the complement to all coordinate planes of codimension~2.
\end{example}

\begin{proposition}
Every complement to a set of coordinate subspaces in $\C^m$ has
the form $U(\sK)$ for some~$\sK$.
\end{proposition}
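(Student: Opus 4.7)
The plan is to observe that any arrangement of coordinate subspaces is determined by the collection of index sets of its subspaces, and that the union of such subspaces depends only on the \emph{upward closure} of this collection under inclusion of index sets; the combinatorial complement is then automatically a simplicial complex.

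First I would record the elementary fact that $L_I\subseteq L_J$ iff $I\supseteq J$, since adding indices to $I$ forces more coordinates to vanish and hence shrinks the subspace. Consequently, for any family of index sets $\mathcal S\subseteq 2^{[m]}$, the union $\bigcup_{I\in\mathcal S}L_I$ is unchanged if $\mathcal S$ is replaced by its upward closure
\[
  \mathcal S^{\mathrm{up}}:=\bigl\{J\subseteq[m]\colon J\supseteq I\text{ for some }I\in\mathcal S\bigr\},
\]
because every new $L_J$ thrown in is already contained in some $L_I$ that was there.

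Next, suppose we are given an arbitrary arrangement of coordinate subspaces in $\C^m$, indexed by some collection $\mathcal S\subseteq 2^{[m]}$. Define
\[
  \sK:=2^{[m]}\setminus\mathcal S^{\mathrm{up}}=\bigl\{J\subseteq[m]\colon J\not\supseteq I\text{ for all }I\in\mathcal S\bigr\}.
\]
I would then check that $\sK$ is a simplicial complex: if $J\in\sK$ and $J'\subseteq J$, then $J'\notin\mathcal S^{\mathrm{up}}$ (otherwise $J'\supseteq I$ for some $I\in\mathcal S$, and transitivity would give $J\supseteq I$, contradicting $J\in\sK$), so $J'\in\sK$. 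Thus $\sK$ is closed under taking subsets, which is precisely the axiom of a simplicial complex on~$[m]$.

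Finally, by construction $\{J\notin\sK\}=\mathcal S^{\mathrm{up}}$, so by the first step
\[
  \bigcup_{I\in\mathcal S}L_I=\bigcup_{J\in\mathcal S^{\mathrm{up}}}L_J=\bigcup_{J\notin\sK}L_J,
\]
and taking complements in~$\C^m$ identifies the given arrangement complement with $U(\sK)$. There is no real obstacle here beyond keeping the inclusion-reversing correspondence between index sets and subspaces straight; the argument is a purely combinatorial manipulation of upward/downward closures.
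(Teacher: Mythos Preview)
Your proof is correct and follows essentially the same idea as the paper's. The paper is much terser: it defines $\sK=\{I\subset[m]\colon L_I\cap U\ne\varnothing\}$ and asserts $U=U(\sK)$ without further comment. Since $L_I\cap U\ne\varnothing$ is equivalent to $I\notin\mathcal S^{\mathrm{up}}$ (an irreducible subspace lies in a finite union only if it lies in one of the pieces), the paper's $\sK$ coincides with yours, and your argument simply spells out the verification that this set is a simplicial complex and that the unions agree.
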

\begin{proof}
Let $U\subset\C^m$ be such a complement. Then we have $U=U(\sK)$
where $\sK=\{I\subset[m]\colon L_I\cap U\ne\varnothing\}$.
\end{proof}

Observe that $\zk\subset U(\sK)$ for every~$\sK$.

\begin{theorem}[{\cite[Th.~8.9]{bu-pa02}}]
There is a $\T^m$-equivariant deformation retraction
$U(\sK)\to\zk$.
\end{theorem}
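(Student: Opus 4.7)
The plan is to reduce to a deformation retraction of $\T^m$-orbit spaces, then lift equivariantly. Let $\mu(\mb z)=(|z_1|^2,\ldots,|z_m|^2)$ be the orbit map; the $\T^m$-invariant sets $U(\sK)$ and $\zk$ map to
\[
A_\sK=\{\mb y\in\R^m_\ge\colon J(\mb y)\in\sK\}\quad\text{and}\quad B_\sK=\bigcup_{I\in\sK}C_I\subset\I^m,
\]
where $J(\mb y)=\{i\colon y_i=0\}$ and $C_I=\{\mb y\in\I^m\colon y_j=1\text{ for }j\notin I\}$. Note that $A_\sK$ is open in $\R^m_\ge$ (its complement is a finite union of closed coordinate subspaces indexed by the minimal non-faces of~$\sK$), while $B_\sK$ is a cubical subcomplex of $\I^m$ in the regular CW structure arising from $[0,1]=\{0\}\cup(0,1)\cup\{1\}$.

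I would first build a deformation retraction $h_t\colon A_\sK\to A_\sK$ onto $B_\sK$ with the crucial property $h_t(\mb y)_i=0\iff y_i=0$. Stage~(a): the coordinatewise straight-line homotopy $y_i\mapsto(1-t)y_i+t\min(y_i,1)$ retracts $A_\sK$ onto $A_\sK\cap\I^m$ and is clearly zero-preserving. Stage~(b): in the cubical structure on $\I^m$, the open cells $e_{J,L}=\{\mb y\colon y_i=0\text{ on }J,\ y_i=1\text{ on }L,\ 0<y_i<1\text{ otherwise}\}$ satisfy $e_{J,L}\subset A_\sK\cap\I^m$ iff $J\in\sK$, and $e_{J,L}\subset B_\sK$ iff $[m]\setminus L\in\sK$; since $\sK$ is closed under subsets, $B_\sK$ is a CW subcomplex. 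Moreover $\bar e_{J,L}\cap B_\sK$ is non-empty whenever $J\in\sK$, as it contains the vertex $y_i=0$ on $J$, $y_i=1$ elsewhere. The retraction is built inductively by cell dimension, pushing each bad cell $e_{J,L}\not\subset B_\sK$ via a straight-line homotopy onto $\bar e_{J,L}\cap B_\sK$ and matching compatibly with retractions previously constructed on lower-dimensional faces.

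Finally, I would lift $h_t$ to a $\T^m$-equivariant deformation retraction $H_t\colon U(\sK)\to U(\sK)$ by
\[
H_t(\mb z)_i=\begin{cases}z_i\sqrt{h_t(\mu(\mb z))_i/|z_i|^2}&\text{if }z_i\ne0,\\0&\text{if }z_i=0,\end{cases}
\]
so that $|H_t(\mb z)_i|=\sqrt{h_t(\mu(\mb z))_i}$; the zero-preservation of $h_t$ makes this continuous as $z_i\to0$, and $\T^m$-equivariance together with $H_0=\id$, $H_1(U(\sK))\subset\zk$ and $H_t|_{\zk}=\id$ are immediate from the construction. The main obstacle is stage~(b): verifying that the inductive cellular construction glues continuously across cell boundaries where the zero-set $J$ jumps. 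This is handled either abstractly via the fact that a CW subcomplex of a regular CW complex is a deformation retract of any open cofibrant neighborhood, or explicitly by induction on $\dim e_{J,L}$ using only the non-emptiness of $\bar e_{J,L}\cap B_\sK$ noted above.
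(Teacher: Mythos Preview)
The paper does not actually prove this theorem; it is only cited from~\cite{bu-pa02}. Your overall strategy---reduce to the $\T^m$-orbit space, construct a deformation retraction of $A_\sK$ onto the cubical complex $B_\sK$, then lift via the square-root formula---is exactly the standard approach and is correct in outline.

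There is, however, a genuine gap in stage~(b). You assert that the inductive cell-by-cell extension can be carried out ``using only the non-emptiness of $\bar e_{J,L}\cap B_\sK$''. Non-emptiness is not enough: to extend a retraction from the boundary of a $k$-cell into a target, the target must be at least $(k-1)$-connected, and a ``straight-line homotopy'' onto it requires convexity or star-shapedness. What makes the induction work is that $\bar e_{J,L}\cap B_\sK$ is \emph{contractible}. Indeed, writing $M=[m]\setminus(J\cup L)$ and $\sK'=\{I'\subset M: J\cup I'\in\sK\}$, one computes
\[
  \bar e_{J,L}\cap B_\sK \;=\; \bigcup_{I'\in\sK'}[0,1]^{I'}\times\{1\}^{M\setminus I'},
\]
a union of faces of the cube $[0,1]^M$ all containing the vertex $(1,\dots,1)$; this set is star-shaped with respect to that vertex. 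Once this is observed, the extension over each bad cell is immediate (e.g.\ move every point linearly toward $(1,\dots,1)$ and stop upon first hitting $B_\sK$), and compatibility across faces is automatic. Your ``abstract'' alternative---that a CW subcomplex deformation retracts off \emph{any} open neighborhood---is false in general and should be replaced by this star-shapedness argument.

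A minor point: the strict biconditional $h_t(\mb y)_i=0\iff y_i=0$ is stronger than needed and is not obviously produced by a generic cellular extension (an interior point could be pushed to a face where an extra coordinate vanishes). For continuity of the lift $H_t$ and for $H_t(U(\sK))\subset U(\sK)$ you only need the forward implication $y_i=0\Rightarrow h_t(\mb y)_i=0$ together with $h_t(A_\sK)\subset A_\sK$; both follow once you arrange that $h_t$ carries each closed cell $\bar e_{J,L}$ into itself, which the construction above does.
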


\begin{example}
Let $\sK=\partial\Delta^{m-1}$. Then
$U(\sK)=\C^m\setminus\{\mathbf 0\}$ retracts onto $\zk=S^{2m-1}$.
\end{example}

\section{Level set for a moment map}\label{levelset} The moment-angle
manifold $\zp$ is closely related to the construction of
\emph{Hamiltonian toric manifolds} via symplectic reduction.

Recall that a \emph{symplectic manifold} $(W,\omega)$ is a smooth
(but not necessaririly compact) manifold $W$ with a closed 2-form
$\omega$ which is nondegenerate at every point. Assume that a
torus $T$ acts on $W$ preserving the symplectic form~$\omega$.
Denote by $\mathfrak t$ the Lie algebra of~$T$ (this algebra is
commutative and therefore trivial, but the construction may be
extended to noncommutative Lie group actions). For any
$u\in\mathfrak t$ denote by $\xi_u$ the corresponding
$T$-invariant vector field on~$W$. The $T$-action is
\emph{Hamiltonian} if the 1-form $\omega(\:\cdot\:,\xi_u)$ is
exact for every $u\in\mathfrak t$. In other words, there is a
function $H_u$ on $W$, called a \emph{Hamiltonian}, such that
$\omega(\xi,\xi_u)=dH_u(\xi)$ for every vector field $\xi$ on~$W$.
The \emph{moment map}
\[
  \mu\colon W\to\mathfrak t^*,\qquad (x,u)\mapsto
  H_{u}(x)
\]
is therefore defined. Its image $\mu(W)$ is a convex polyhedron (a
convex polytope if $W$ is compact) by a theorem of Atiyah and
Guillemin--Sternberg~\cite{guil94}.

\begin{example}\label{simcm}
A basic example is $W=\C^m$ with the symplectic form
$\omega=2\sum_{k=1}^mdx_k\wedge dy_k$ where $z_k=x_k+iy_k$. The
coordinatewise action of $\T^m$ is Hamiltonian and the moment map
$\mu\colon\C^m\to\R^m$ is given by
$\mu(z_1,\ldots,z_m)=(|z_1|^2,\ldots,|z_m|^2)$.
\end{example}

A simple polytope~\eqref{ptope} is called \emph{Delzant} if the
vectors $\mb a_i$ have integral coordinates and for every vertex
$v=F_{i_1}\cap\ldots\cap F_{i_n}$ the set $\{\mb
a_{i_1},\ldots,\mb a_{i_n}\}$ is a basis of the integral lattice
$\Z^n\subset\R^n$.

Assume now that $P$ is a Delzant polytope. We also assume for
simplicity that there are no redundant inequalities
in~\eqref{ptope} (redundant inequalities may be also taken into
account by simple modifications to the constructions below). Let
$\Lambda\colon\R^m\to\R^n$, \ $\mb e_i\mapsto\mb a_i$, be the
traspose of~$A_P$. Since $P$ is Delzant, it restricts to a map of
integral lattices $\Z^m\to\Z^n$ and defines a map of tori
$\T^m\to\T^n$, which we continue denoting~$\Lambda$. Consider
$K:=\Ker(\Lambda\colon\T^m\to\T^n)$. Because of the Delzant
condition, $K$ is isomorphic to an $(m-n)$-torus.

\begin{example}
By restricting the $\T^m$-action of Example~\ref{simcm} to $K$ we
obtain a Hamiltonian action whose moment map is given by the
composition
\[
  \mu_K\colon\C^m\longrightarrow\R^m\stackrel\varGamma\longrightarrow\R^{m-n},
\]
where $\varGamma=(\gamma_{jk})$ is defined by~\eqref{iprn}. The
quadratic forms $\sum_{k=1}^m\gamma_{jk}|z_k|^2$ for $1\le j\le
m-n$ constitute a basis in the space of Hamiltonian functions,
and~\eqref{zpqua} implies the following

\begin{proposition}
If $P$ is a Delzant polytope then the moment-angle manifold $\zp$
is identified with the level set $\mu_K^{-1}(\mb c)$ of the moment
map $\mu_K$ for the Hamiltonian action of $K$ on~$\C^m$, where
$\mb c=(c_1,\ldots,c_{m-n})$ and $c_j=\sum_{k=1}^m\gamma_{jk}b_k$.
\end{proposition}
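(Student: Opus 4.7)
The plan is to compute $\mu_K$ explicitly using the standard principle that if a Lie subgroup $K\subset G$ acts Hamiltonianly on $(W,\omega)$ as the restriction of a Hamiltonian $G$-action with moment map $\mu\colon W\to\mathfrak g^*$, then the moment map for the $K$-action is $\pi^*\circ\mu$, where $\pi^*\colon\mathfrak g^*\to\mathfrak k^*$ is the dual of the inclusion $\mathfrak k\hookrightarrow\mathfrak g$. Applied to $K\hookrightarrow\T^m$ acting on $\C^m$ with the moment map of Example~\ref{simcm}, this will immediately give the equations defining $\mu_K^{-1}(\mb c)$, which can then be matched against~\eqref{zpqua}.

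First I would identify the Lie algebra $\mathfrak k$ of $K=\ker\Lambda$. At the level of Lie algebras the map $\Lambda\colon\R^m\to\R^n$ sends $\mb e_i\mapsto\mb a_i$, so $\mathfrak k$ consists of those $\mb y\in\R^m$ satisfying $\sum_i y_i\mb a_i=\mathbf 0$. Since the rows of $\varGamma$ form a basis of linear relations between the $\mb a_i$, the transpose $\varGamma^T\colon\R^{m-n}\to\R^m$ is an isomorphism onto $\mathfrak k$. Using the canonical self-dualities $(\R^m)^*\cong\R^m$ and $\mathfrak k^*\cong\R^{m-n}$ induced by $\varGamma^T$, the dual of the inclusion $\mathfrak k\hookrightarrow\R^m$ is exactly the map $\varGamma\colon\R^m\to\R^{m-n}$.

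Next, combining this with the $\T^m$-moment map $\mu(\mb z)=(|z_1|^2,\ldots,|z_m|^2)$ from Example~\ref{simcm}, the restriction principle yields
\[
  \mu_K(\mb z)\;=\;\varGamma\circ\mu(\mb z)\;=\;\Bigl(\textstyle\sum_{k=1}^m\gamma_{jk}|z_k|^2\Bigr)_{1\le j\le m-n}.
\]
Therefore $\mu_K^{-1}(\mb c)$ is cut out by the equations $\sum_{k=1}^m\gamma_{jk}|z_k|^2=c_j=\sum_{k=1}^m\gamma_{jk}b_k$ for $1\le j\le m-n$, which is precisely the system~\eqref{zpqua} describing $i_Z(\zp)\subset\C^m$. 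This identifies $\zp$ with $\mu_K^{-1}(\mb c)$.

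The only nontrivial step is the bookkeeping for the identification $\mathfrak k^*\cong\R^{m-n}$ via $\varGamma$; once this is done, the matching of equations is automatic. The Delzant hypothesis enters only to guarantee that $K$ is a genuine $(m-n)$-torus (rather than a subgroup with finite components), which ensures that $\mu_K$ is well defined as a map into $\mathfrak k^*\cong\R^{m-n}$ with the stated dimensional count.
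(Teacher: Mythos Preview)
Your proof is correct and follows the same approach as the paper: the paper simply asserts that the moment map for the restricted $K$-action is the composition $\mu_K\colon\C^m\stackrel{\mu}{\longrightarrow}\R^m\stackrel{\varGamma}{\longrightarrow}\R^{m-n}$ and then invokes~\eqref{zpqua}, while you supply the extra justification (via the restriction principle and the identification of $\mathfrak k$ with the image of $\varGamma^T$) for why $\mu_K$ has this form. The core argument is identical.
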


Note that $\mb c$ is a \emph{regular value} of the moment map
$\mu_K$ by Theorem~\ref{zpsmooth}.
\end{example}

\begin{lemma}\label{freeact}
If $P$ is Delzant then the action of $K\subset\T^m$ on $\zp$ is
free.
\end{lemma}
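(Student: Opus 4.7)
The plan is to reduce the freeness of the $K$-action to a statement about isotropy subgroups in $\T^m$, and then use the Delzant condition to show the relevant intersection with $K$ is trivial.

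First I would recall the description of $\zp$ as $P\times\T^m/{\sim}$ from the Identification Space section. Under this description, the $\T^m$-isotropy subgroup of a point $\mb z\in\zp$ projecting to $p\in P$ is exactly the coordinate subgroup $T^{I_p}\subset\T^m$, where $I_p=\{i\in[m]\colon p\in F_i\}$. Consequently, the restricted $K$-action has the isotropy at $\mb z$ equal to $K\cap T^{I_p}$. Thus the whole claim boils down to showing $K\cap T^{I_p}=\{1\}$ for every $p\in P$.

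Next I would unwind what this intersection is. Since $K=\Ker(\Lambda\colon\T^m\to\T^n)$ with $\Lambda$ induced on tori by the lattice homomorphism $\mb e_i\mapsto\mb a_i$, we have $K\cap T^{I_p}=\Ker(\Lambda|_{T^{I_p}})$. The restriction $\Lambda|_{T^{I_p}}$ is the map of tori coming from the lattice map $\Z^{I_p}\to\Z^n$ that sends $\mb e_i$ to $\mb a_i$ for $i\in I_p$. A map of tori arising from a lattice map $\varphi\colon\Z^k\to\Z^n$ has trivial kernel precisely when $\varphi$ is split injective, i.e., when $\varphi(\Z^k)$ is a direct summand of $\Z^n$ (otherwise the cokernel has torsion, producing nontrivial kernel in the induced map of compact tori). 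So the task becomes: show that $\{\mb a_i\colon i\in I_p\}$ generates a direct summand of $\Z^n$.

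This is exactly where the Delzant hypothesis enters. Since $P$ is simple and $p$ lies in the face $\bigcap_{i\in I_p}F_i$, which is nonempty, this face contains at least one vertex $v=F_{i_1}\cap\cdots\cap F_{i_n}$; for such a vertex $I_p\subseteq I_v$ and $|I_v|=n$. The Delzant condition states that $\{\mb a_{i_1},\ldots,\mb a_{i_n}\}$ is a $\Z$-basis of $\Z^n$, so its subset $\{\mb a_i\colon i\in I_p\}$ can be completed to a basis of $\Z^n$, hence spans a direct summand. Therefore $\Lambda|_{T^{I_p}}$ is injective, $K\cap T^{I_p}=\{1\}$, and the $K$-action on $\zp$ is free.

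I do not expect any genuine obstacle here; the only minor delicate point is the equivalence \emph{split injective at the lattice level} $\Leftrightarrow$ \emph{injective at the torus level}, which is standard (via $\Hom(-,\T)$ applied to the short exact sequence defined by $\varphi$). Everything else is a direct bookkeeping with isotropy subgroups plus one invocation of Delzant.
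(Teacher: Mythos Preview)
Your proposal is correct and follows essentially the same line as the paper's proof: identify the $\T^m$-isotropy subgroups as coordinate subgroups $T^I$ with $\bigcap_{i\in I}F_i\ne\varnothing$, and then use the Delzant condition to see that $\Lambda|_{T^I}$ is injective, whence $K\cap T^I=\{1\}$. The only differences are cosmetic: the paper works with the embedding $i_Z(\zp)\subset\C^m$ (so $I=I(\mb z)$ is the set of zero coordinates) rather than the identification space model, and it leaves the step ``$\Lambda|_{T^I}$ is injective by the Delzant condition'' unexpanded, whereas you spell out the split-injective-on-lattices argument.
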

\begin{proof}
A point $\mb z\in\C^m$ has a nontrivial isotropy subgroup with
respect to the $\T^m$-action only if some of the coordinates of
$\mb z$ vanish. These $\T^m$-isotropy subgroups are of the form
$T^{I(\mb z)}$, see~\eqref{tisub}, where $I(\mb z)$ is the set of
zero coordinates of~$\mb z$. If $\mb z\in i_Z(\zp)$ then
$\bigcap_{i\in I(\mb z)}F_i\ne\varnothing$, and the restriction of
$\Lambda\colon\T^m\to\T^n$ to every such $T^{I(\mb z)}$ is an
injection by the Delzant condition. Therefore, $K=\Ker \Lambda$
intersects every $\T^m$-isotropy subgroup only at the unit.
\end{proof}

\begin{construction}[Symplectic reduction]
The manifold $\mu_K^{-1}(\mb c)\cong\zp$ fails to be symplectic as
the restriction of $\omega$ to $\mu_K^{-1}(\mb c)$ is degenerate.
However it may be shown~\cite{guil94} that the quotient
$\mu_K^{-1}(\mb c)/K$ supports a nondegenerate 2-form $\omega'$
satisfying the condition $p^*\omega'=i_Z^*\omega$, where $p\colon
\mu_K^{-1}(\mb c)\to\mu_K^{-1}(\mb c)/K$ is the projection.
Therefore $\bigl(\mu_K^{-1}(\mb c)/K,\omega'\bigr)$ is a
symplectic manifold of dimension $2n$. It has a residual
Hamiltonian action of the $n$-torus~$\T^m/K$. The manifold
$M_P:=\mu_K^{-1}(\mb c)/K$ is referred to as a \emph{Hamiltonian
toric manifold}. The passage from $(\C^m,\omega)$ to
$(M_P,\omega')$ is known as the \emph{symplectic reduction} of
$\C^m$ by the action of $K$.
\end{construction}

Hamiltonian toric manifolds are closely related to nonsingular
projective toric varieties in algebraic geometry.

A \emph{toric variety} is a normal algebraic variety $X$ on which
an algebraic torus $(\C^\times)^n$ acts with a dense orbit,
see~\cite{dani78}.

A set of vectors $\mb a_1,\ldots,\mb a_k\in\R^n$ defines a convex
polyhedral \emph{cone}
\[
  \sigma=\{\mu_1\mb a_1+\ldots+\mu_k\mb
  a_k\colon\mu_i\in\R_\ge\}.
\]
A cone is \emph{rational} if its generating vectors can be chosen
from the integral lattice $\Z^n\subset\R^n$, and is \emph{strongly
convex} if it does not contain a line. A cone is \emph{simplicial}
(respectively, \emph{regular}) if it is generated by a part of
basis of $\R^n$ (respectively, $\Z^n$).

A \emph{fan} is a finite collection
$\Sigma=\{\sigma_1,\ldots,\sigma_s\}$ of strongly convex cones in
some $\R^n$ such that every face of a cone in $\Sigma$ belongs to
$\Sigma$ and the intersection of any two cones in $\Sigma$ is a
face of each. A fan $\Sigma$ is \emph{rational} (respectively,
\emph{simplicial}, \emph{regular}) if every cone in $\Sigma$ is
rational (respectively, simplicial, regular). A fan
$\Sigma=\{\sigma_1,\ldots,\sigma_s\}$ is called \emph{complete} if
$\sigma_1\cup\ldots\cup\sigma_s=\R^n$.

\begin{example}
Let \eqref{ptope} be a simple polytope. The \emph{normal fan}
$\Sigma_P$ of $P$ consists of cones spanned by those sets of
vectors $\mb a_{j_1},\ldots,\mb a_{j_k}$ for which intersection
$F_{j_1}\cap\ldots\cap F_{j_k}$ is nonempty. It is a complete
simplicial fan in~$\R^n$. If $P$ is Delzant, then $\Sigma_P$ is
rational and regular.
\end{example}

As is well known in algebraic geometry, toric varieties are
classified by rational fans~\cite{dani78}. Under this
correspondence, complete fans give rise to compact varieties,
normal fans of polytopes to projective varieties, regular fans to
nonsingular varieties, and simplicial fans to varieties with mild
(orbifold-type) singularities.

There is the following algebraic version of symplectic reduction,
which is now commonly referred to as the `Cox construction',
although it takes origin in the work of several
authors~\cite{cox95}.

\begin{construction}
Assume that $\Sigma$ is a complete rational simplicial fan
in~$\R^n$ with $m$ one-dimensional cones generated by primitive
vectors $\mb a_1,\ldots,\mb a_m\in\Z^m$. The \emph{underlying
simplicial complex} of $\Sigma$ is defined as
\[
  \sK_\Sigma:=\bigl\{I=\{i_1,\ldots,i_k\}\subset[m]\colon
  \mb a_{i_1},\ldots,\mb a_{i_k}\text{ span a cone of }
  \Sigma\bigr\}.
\]
It may be viewed geometrically as the intersection of $\Sigma$
with a unit sphere.

Let $\Lambda_\C\colon(\C^\times)^m\to(\C^\times)^n$ be the map of
algebraic tori corresponding to the map $\Z^m\to\Z^n$, \ $\mb
e_i\mapsto\mb a_i$. Set $G:=\Ker\Lambda_\C$. This is an
$(m-n)$-dimensional algebraic subgroup in $(\C^\times)^m$, hence,
it is isomorphic to a product of $(\C^\times)^{m-n}$ and a finite
group (the finite group is trivial if the fan is regular). The
group $G$ acts almost freely (with finite isotropy subgroups) on
the open set $U(\sK_\Sigma)$ of~\eqref{uset}; moreover, this
action is free if $\Sigma$ is a regular fan. This is proved in the
same way as in Lemma~\ref{freeact}.

The toric variety associated to the fan $\Sigma$ is defined as the
quotient $X_\Sigma:=U(\sK_\Sigma)/G$. It is a complex algebraic
variety of dimension~$n$. The variety $X_\Sigma$ is nonsingular
whenever $\Sigma$ is regular; otherwise it has only orbifold-type
singularities (locally isomorphic to a quotient of $\C^n$ by a
finite group). The quotient algebraic torus
$(\C^\times)^m/G\cong(\C^\times)^n$ acts on $X_\Sigma$ with a
dense orbit.

The variety $X_\Sigma$ is projective if and only if $\Sigma$ is
the normal fan of a polytope~$P$; in this case we shall denote the
variety by~$X_P$.

The Cox construction extends to noncomplete and nonsimplicial fans
(in the latter case the ordinary quotient needs to be replaced by
the \emph{categorical} one), but we shall not need this generality
here.
\end{construction}

Now if $P$ is a Delzant polytope, then the nonsingular projective
toric variety $X_P$ is symplectic and is $\T^n$-equivariantly
symplectomorphic (for an appropriate choice of the symplectic
form) to the Hamiltonian toric manifold~$M_P$. In other words, the
quotient of the open set $U(\sK_{\Sigma_P})\subset\C^m$ by the
action of a noncompact group $G$ can be identified with the
quotient of the compact subset $i_Z(\zp)\subset U(\sK_{\Sigma_P})$
by a compact subgroup $K\subset G$~\cite[App.~1]{guil94}.

\newpage

\part{Topology of moment-angle complexes}
The topology of moment-angle manifolds $\zp$ and complexes $\zk$
is quite complicated even for relatively small and easily
described polytopes $P$ and complexes~$\sK$. We give evidences to
this by describing the cohomology ring of $\zk$ and then providing
explicit homotopy and diffeomorphism types for some series of $P$
and~$\sK$.

\section{The cohomology ring}
We continue denoting by $\sK$ a simplicial complex on~$[m]$. We
denote by $\Z[v_1,\ldots,v_m]$ the polynomial ring and by
$\Lambda[u_1,\ldots,u_m]$ the exterior ring with integer
coefficients. Given a subset $I=\{i_1,\ldots,i_k\}\subset[m]$ we
denote by $v^I$ the square-free monomial $v_{i_1}\cdots v_{i_k}$.
We use `dg ring' as an abbreviation for `differential graded ring'
and similarly for abelian groups ($\Z$-modules).

\begin{definition}\label{deffr}
The \emph{face ring} (also known as the \emph{Stanley--Reisner
ring}) of~$\sK$ is the following quotient of the polynomial ring
on $m$ generators:
\[
  \Z[\mathcal K]=\Z[v_1,\ldots,v_m]/(v^I\colon I\notin\sK).
\]
We make $\Z[\mathcal K]$ a graded ring by setting
$\deg v_i=2$ for all~$i$.
\end{definition}

\begin{example}
1. If $\sK=\partial\Delta^{m-1}$ then
$\Z[\sK]=\Z[v_1,\ldots,v_m]/(v_1\cdots v_m)$.

2. If $\sK$ is $m$ points, then
$\Z[\sK]=\Z[v_1,\ldots,v_m]/(v_iv_j\text{ for }1\le i<j\le m)$.
\end{example}

We abbreviate $\Z[v_1,\ldots,v_m]$ to $\Z[m]$ to make formulae
shorter. The face ring $\Z[\sK]$ is a $\Z[m]$-module via the
quotient projection. Its \emph{free resolution} is an exact
sequence of finitely generated $\Z[m]$-modules
\[
  0\longrightarrow R^{-m}\longrightarrow\ldots\longrightarrow
  R^{-1}\longrightarrow R^0\longrightarrow\Z[\sK]\longrightarrow0
\]
in which all $R^{-i}$ are free modules. The \emph{$(-i)$th $\Tor$
group} $\Tor^{-i}_{\Z[m]}(\Z[\sK],\Z)$ is defined as the $(-i)$th
cohomology group of the complex
\[
  0\longrightarrow R^{-m}\otimes_{\Z[m]}\Z\longrightarrow\ldots\longrightarrow
  R^{-1}\otimes_{\Z[m]}\Z\longrightarrow
  R^0\otimes_{\Z[m]}\Z\longrightarrow0.
\]

The groups $\Tor^{-i}_{\Z[m]}(\Z[\sK],\Z)$ acquire an internal
grading from the grading in $\Z[m]$ and $\Z[\sK]$. We define
\[
  \Tor_{\Z[m]}(\Z[\sK],\Z):=\bigoplus_{i=0}^m\Tor^{-i}_{\Z[m]}(\Z[\sK],\Z),
\]
which therefore has two gradings; the \emph{total degree} is the
sum of these two gradings. Morever, $\Tor_{\Z[m]}(\Z[\sK],\Z)$ has
a canonical multiplication turning it into a graded ring with
respect to the total degree (see~\cite{bu-pa02}).

\begin{theorem}\label{zkcoh}
The cohomology ring of the moment-angle complex $\zk$ is given by
the isomorphisms
\[
\begin{aligned}
  H^*(\zk;\Z)&\cong\Tor_{\Z[v_1,\ldots,v_m]}(\Z[\sK],\Z)\\
  &\cong H\bigl[\Lambda[u_1,\ldots,u_m]\otimes\Z[\sK],d\bigr],
\end{aligned}
\]
where the latter ring is the cohomology of the dg ring whose
grading and differential are given by
\[
  \deg u_i=1,\;\deg v_i=2;\quad du_i=v_i,\;dv_i=0.
\]
\end{theorem}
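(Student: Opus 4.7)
The plan is to prove the two isomorphisms separately: the second, identifying $\Tor_{\Z[m]}(\Z[\sK],\Z)$ with the cohomology of the Koszul dg ring, is purely algebraic; the first, identifying this Tor-algebra with $H^*(\zk;\Z)$, is geometric and rests on an explicit cellular model for $\zk$.

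For the algebraic step, I would use the standard fact that the Koszul complex $(\Lambda[u_1,\ldots,u_m]\otimes\Z[v_1,\ldots,v_m],\,d)$ with $du_i = v_i$ and $dv_i = 0$ is a free resolution of $\Z$ as a $\Z[m]$-module, since $v_1,\ldots,v_m$ form a regular sequence. Tensoring this resolution with $\Z[\sK]$ over $\Z[m]$ yields the dg $\Z[\sK]$-module $(\Lambda[u_1,\ldots,u_m]\otimes\Z[\sK],\,d)$, whose cohomology is by definition $\Tor_{\Z[m]}(\Z[\sK],\Z)$. The multiplicative structure matches because the Koszul resolution carries a coassociative coproduct inducing the canonical product on Tor.

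For the geometric step, I would endow $\zk$ with a CW structure inherited from a product CW structure on $\D^m$. Each factor $\D=\{|z|\le 1\}$ is decomposed into three cells: a $0$-cell (the basepoint $1$), a $1$-cell $T$ (the rest of the boundary circle), and a $2$-cell $D$ (the open disk). Cells of $\D^m$ are then indexed by disjoint pairs of subsets $(J,K)\subset[m]$, with $T$'s at positions in $J$ and $D$'s at positions in $K$; from the definition of $B_I$ one checks that such a cell lies in $\zk$ iff $K\in\sK$. Denoting the dual cellular cochain by $u_J v_K$ puts a basis of $C^*(\zk)$ in explicit bijection with the monomial basis of $\Lambda[u_1,\ldots,u_m]\otimes\Z[\sK]$, preserving bidegrees. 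Since $\partial D = T$ inside a single $\D$, the cellular coboundary satisfies $\delta u_i = v_i$ and $\delta v_i = 0$, matching the Koszul differential; and since $\D^m$ is a product, the cellular cup product is a tensor product, which upon restriction to $\zk$ produces precisely the ring structure on $\Lambda[u_1,\ldots,u_m]\otimes\Z[\sK]$, the relations $v^I=0$ for $I\notin\sK$ corresponding to cells absent from $\zk$.

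The main obstacle is the bookkeeping of signs and orientations needed to promote the cellular identification to a genuine isomorphism of dg rings (and not merely of bigraded abelian groups up to chain homotopy): one must fix orientations of $T$ and $D$ so that $\partial D=T$ holds with the correct sign, and verify that the standard cellular diagonal on the product $\D^m$ induces on dual cochains the tensor-product multiplication matching $\Lambda[u_1,\ldots,u_m]\otimes\Z[\sK]$. Once this is established, passing to cohomology gives the claimed ring isomorphism $H^*(\zk;\Z) \cong H[\Lambda[u_1,\ldots,u_m]\otimes\Z[\sK],\,d]$, and combining with the algebraic step completes the proof.
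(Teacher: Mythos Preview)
Your proposal contains a genuine gap in the geometric step. You claim that the cellular cochains $C^*(\zk)$ are in bijection with the monomial basis of $\Lambda[u_1,\ldots,u_m]\otimes\Z[\sK]$, but this cannot be right: the cells of $\zk$ are indexed by pairs $(J,K)$ with $J\cap K=\varnothing$ and $K\in\sK$, so $C^*(\zk)$ has finite rank as an abelian group, whereas $\Lambda[u_1,\ldots,u_m]\otimes\Z[\sK]$ has infinite rank (it contains all powers $v_i^2,v_i^3,\ldots$ for $\{i\}\in\sK$, as well as the monomials $u_iv_i$). What $C^*(\zk)$ is actually isomorphic to, as a dg group, is the quotient
\[
  R^*(\sK):=\Lambda[u_1,\ldots,u_m]\otimes\Z[\sK]\big/(u_iv_i,\;v_i^2,\;1\le i\le m),
\]
whose basis monomials $u^Jv^K$ are precisely those with $J\cap K=\varnothing$ and $K\in\sK$. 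This is exactly the paper's Step~2.

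Consequently your argument is missing an essential ingredient, namely the paper's Step~3: one must show that the quotient projection $\Lambda[u_1,\ldots,u_m]\otimes\Z[\sK]\to R^*(\sK)$ is a quasi-isomorphism, i.e.\ that the ideal generated by the $u_iv_i$ and $v_i^2$ is acyclic. The paper does this by producing an explicit additive section $\iota$ and a chain homotopy $s$ with $ds+sd=\id-\iota\varrho$. Without this step, your identification of cellular cochains with the Koszul dg ring is simply false at the chain level, and the argument does not close. (The multiplicativity discussion is also affected: the ring isomorphism is established between $R^*(\sK)$ and $C^*(\zk)$ via an explicit cellular diagonal, and then pushed back to $\Lambda\otimes\Z[\sK]$ only in cohomology via the quasi-isomorphism~$\varrho$.)
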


This theorem was proved in~\cite{bu-pa99} (with coefficients in a
field), see also~\cite{bu-pa02}. We give a sketch of the proof for
$\Z$ coefficients, following~\cite{pano08}. This proof first
appeared in~\cite{b-b-p04}. Another proof of the integral version
appeared in~\cite{fran06}.

\begin{proof}[Sketch of proof of Theorem~\ref{zkcoh}]
We only prove that $H^*(\zk;\Z)$ is isomorphic to
$H\bigl[\Lambda[u_1,\ldots,u_m]\otimes\Z[\sK],d\bigr]$; the fact
that the latter ring is isomorphic to the $\Tor$ is a standard
application of the Koszul resolution. The proof is split into 4
steps.

\smallskip

{\noindent \hangindent=27mm \hangafter=4 \emph{Step 1: cellular
decomposition of $\zk$.} We consider the following decomposition
of the disc $\D$ into 3 cells: the point $1\in\D$ is a 0-cell; the
complement to $1$ in the boundary circle is a 1-cell, which we
denote~$T$; and the interior of $\D$ is a 2-cell, which we
denote~$D$. By taking product we obtain a cellular decomposition
of~$\D^m$ whose cells are parametrised by pairs of subsets
$I,J\subset [m]$ with $I\cap J=\varnothing$: the set $I$
parametrises the $T$-cells in the product and $J$ parametrises the
$D$-cells. We denote the cell of $\D^m$ corresponding to a pair
$I,J$ by $\varkappa(I,J)$; it is a product of $|I|$ cells of $T$
type and $|J|$ cells of $D$ type. Then $\zk$ includes as a
cellular subcomplex in $\D^m$; we have $\varkappa(I,J)\subset\zk$
whenever $J\in\sK$.

}
\noindent\raisebox{\baselineskip}[0pt]
{%
\begin{picture}(20,20)
  \put(12,10){\circle{20}}
  \put(19,10){\circle*{1.3}}
  \put(20,9){$1$}
  \put(11,9){$D$}
  \put(16,16.5){$T$}
\end{picture}%
}

\vspace{-0.9\baselineskip}

We denote by $C^*(\zk)$ the cellular cochain group of~$\zk$. It
has a basis of cochains $\varkappa(I,J)^*$ dual to the
corresponding cells.

\smallskip

\noindent\emph{Step 2: dg ring model for $C^*(\zk)$.} We consider
the following quotient dg ring:
\[
  R^*(\sK):=\Lambda[u_1,\ldots,u_m]\otimes\Z[\sK]\big/
  (u_iv_i,\;v_i^2,\quad\text{for }1\le i\le m).
\]
It has a finite rank as an abelian group, unlike
$\Lambda[u_1,\ldots,u_m]\otimes\Z[\sK]$. Namely, the monomials
$u^Iv^J$ with $I\cap J=\varnothing$ and $J\in\sK$ constitute a
basis of~$R^*(\sK)$. Define the map
\[
  g\colon R^*(\sK)\longrightarrow C^*(\zk),\quad
  u^Iv^J\mapsto\varkappa(I,J)^*.
\]
It is an isomorphism of dg groups by inspection. Therefore we have
an additive isomorphism $H[R^*(\sK)]\cong H^*(\zk)$.

\smallskip

\noindent\emph{Step 3:
$H\bigl[\Lambda[u_1,\ldots,u_m]\otimes\Z[\sK],d\bigr]\cong
H\bigl[R^*(\sK),d\bigr]$, i.e. $(u_iv_i,\;v_i^2,\;1\le i\le m)$ is
an acyclic ideal.} We have a pair of maps of dg groups:\\[-0.5\baselineskip]
\[
  \Lambda[u_1,\ldots,u_m]\otimes\Z[\sK]
  {\raisebox{-0.3\baselineskip}[0pt]{$\stackrel\varrho\longrightarrow$}\atop
  \raisebox{0.5\baselineskip}[0pt]{$\mathop{\longleftarrow}\limits_{\iota}$}} R^*(\sK)
\]
where $\varrho$ is the quotient projection (a ring map) and
$\iota$ sends $u^Iv^J$ to itself (it is a monomorphism of dg
groups, but not a ring map). We have $\varrho\cdot\iota=\id$ and
there is an explicitly defined map $s$ satisfying the identity
$ds+sd=\id-\iota\cdot\varrho$ (a cochain homotopy between $\id$
and $\iota\cdot\varrho$). It follows that $\varrho$ induces an
isomorphism in cohomology.

\smallskip

\noindent\emph{Step 4: $g\colon R^*(\sK)\to C^*(\zk)$ is a ring
isomorphism.} We already know from Step~2 that $g$ is an
isomorphism of dg groups. A ring structure in $C^*(\zk)$ is
defined by a choice of a cellular approximation for the diagonal
map $\Delta\colon\zk\to\zk\times\zk$. Consider the map
$\widetilde{\Delta}\colon\D\to\D\times\D$, defined in polar
coordinates $z=\rho e^{i\varphi}\in\D$, $0\le\rho\le1$,
$0\le\varphi<2\pi$ as follows:
\[
  \rho e^{i\varphi}\mapsto\left\{
  \begin{array}{ll}
    (1+\rho(e^{2i\varphi}-1),1)&\text{ for }0\le\varphi\le\pi,\\
    (1,1+\rho(e^{2i\varphi}-1))&\text{ for }\pi\le\varphi<2\pi.
  \end{array}
  \right.
\]
This is a cellular map (with respect to the cellular decompostion
of Step~1) homotopic to the diagonal
$\Delta\colon\D\to\D\times\D$. Taking an $m$-fold product, we
obtain a cellular approximation
$\widetilde{\Delta}\colon\D^m\to\D^m\times\D^m$ which restricts to
a cellular approximation for the diagonal map of $\zk$ for
arbitrary $\sK$. The ring structure in $C^*(\zk)$ defined by the
composition
\[
\begin{CD}
  C^*(\zk)\otimes C^*(\zk) @>\times>> C^*(\zk\times \zk)
  @>\widetilde{\Delta}^*>> C^*(\zk)
\end{CD}
\]
and induces the cup product in the cohomology of~$\zk$.

We therefore need to check that $g\colon R^*(\sK)\to C^*(\zk)$ is
a multiplicative map with respect to the ring structures in
$R^*(\sK)$ and $C^*(\zk)$. To do this we note that both ring
structures are functorial with respect to inclusions of simplicial
complexes, and $R^*(\Delta^{m-1})\to C^*(\D^m)$ is a ring
isomorphism by inspection (both rings are isomorphic to
$\Lambda[u_1,\ldots,u_m]\otimes\Z[m]/(u_iv_i,\;v_i^2,\;1\le i\le
m)$). The multiplicativity of $g$ for arbitrary $\sK$ follows by
considering the commutative diagram
\[
\begin{CD}
  R^*(\Delta^{m-1}) @>\text{\parbox{18pt}{ring\\[-.5\baselineskip]\hphantom{a}iso}}>> C^*(\D^m)\\
  @V\text{\parbox{18pt}{ring\\[-.5\baselineskip]\hphantom{a}epi}}VV
  @VV\text{\parbox{18pt}{ring\\[-.5\baselineskip]\hphantom{a}epi}}V\\
  R^*(\sK) @>g>\text{\parbox{15pt}{add.\\[-.5\baselineskip]\hphantom{i}iso}}> C^*(\zk).
\end{CD}
\]\\[-1.5\baselineskip]
\end{proof}

The bigrading in the Tor defines a bigrading in $H^*(\zk)$, and we
may define
\[
  H^{-i,2j}(\zk):=\Tor^{-i,2j}_{\Z[v_1,\ldots,v_m]}(\Z[\sK],\Z).
\]
This bigrading may be also induced from the dg ring
$\Lambda[u_1,\ldots,u_m]\otimes\Z[\sK]$ by setting $\bideg
u_i=(-1,2)$ and $\bideg v_i=(0,2)$. Moreover, this bigrading may
be refined to a $\Z\oplus\Z^m$-multigrading by setting
\[
  \mdeg u_i=(-1,2\mb e_i),\quad \mdeg v_i=(0,2\mb e_i)
\]
where $\mb e_i\in\Z^m$ is the $i$th basis vector. We denote by
$H^{-i,2\mb a}(\zk)$ the component of multidegree $(-i,2\mb a)$
for $\mb a\in\Z^m$. Since $H^*(\zk)\cong H[R^*(\sK)]$ (see the
proof of Theorem~\ref{zkcoh}), $H^{-i,2\mb a}(\zk)$ is nonzero
only if $\mb a\in\{0,1\}^m$, and such vectors $\mb a$ may be
identified with subsets $J\subset[m]$ by considering the unit
coordinates of $\mb a$. We therefore have
\[
  H^k(\zk)=\bigoplus_{-i+2j=k}H^{-i,2j}(\zk)=\bigoplus_{-i+2|J|=k}H^{-i,2J}(\zk)\quad
  \text{for } i,j,k\ge0,\;J\subset[m].
\]

Given $J\subset[m]$ denote by $\sK_J$ the corresponding \emph{full
subcomplex} of $\sK$ (the restriction of $\sK$ to~$J$).

\begin{corollary}
We have
\[
  H^k(\zk)\cong\bigoplus_{J\subset[m]}
  \widetilde H^{k-|J|-1}(\sK_J)
  \quad\text{and}\quad
  H^{-i,2J}(\zk)\cong\widetilde
  H^{|J|-i-1}(\sK_J),
\]
where $\widetilde H^p(\sK_J)$ denotes the $p$th reduced simplicial
cohomology group of~$\sK_J$.
\end{corollary}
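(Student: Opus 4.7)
The plan is to exploit the multigrading on the dg model $R^*(\sK):=\Lambda[u_1,\ldots,u_m]\otimes\Z[\sK]/(u_iv_i,v_i^2)$ provided by Theorem~\ref{zkcoh}. The differential $d$ preserves the internal $\Z^m$-grading since both $u_i$ and $v_i$ have $\Z^m$-degree $2\mb e_i$; hence $R^*(\sK)$ splits as a direct sum of subcomplexes $R^*_J(\sK)$ indexed by $J\subset[m]$, namely the pieces of $\Z^m$-degree $2J$. Computing the cohomology of each summand separately will yield the bigraded statement $H^{-i,2J}(\zk)\cong\widetilde H^{|J|-i-1}(\sK_J)$, and the unbigraded formula for $H^k(\zk)$ will follow by summing along the diagonal $-i+2|J|=k$.

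The central step will be to identify $R^*_J(\sK)$, up to a degree shift, with the reduced (augmented) simplicial cochain complex $\widetilde C^*(\sK_J)$. A $\Z$-basis of the component of multidegree $(-i,2J)$ is given by monomials $u^Iv^L$ with $I\sqcup L=J$ and $L\in\sK$; since $L\subset J$, such $L$ runs exactly through the simplices of the full subcomplex $\sK_J$ of cardinality $|J|-i$, with $I=J\setminus L$ determined. I would then define
\[
  \phi\colon R^{-i,2J}(\sK)\longrightarrow\widetilde C^{|J|-i-1}(\sK_J),\qquad u^{J\setminus L}v^L\mapsto L^*,
\]
which is an isomorphism of graded abelian groups by inspection, with $\widetilde C^{-1}(\sK_J)\cong\Z$ corresponding to $L=\varnothing$.

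The remaining task is to check that $\phi$ intertwines differentials up to signs. In $R^*(\sK)$ one computes
\[
  d(u^{J\setminus L}v^L)=\sum_{k\in J\setminus L}\pm\, u^{J\setminus L\setminus k}\,v^{L\cup k},
\]
and a term vanishes precisely when $L\cup k\notin\sK_J$, thanks to the relations defining $\Z[\sK]$. Term by term this matches the simplicial coboundary $\delta L^*=\sum_{k\colon L\cup k\in\sK_J}\pm(L\cup k)^*$. The main obstacle, such as it is, is the bookkeeping of signs: one must check that the Koszul signs arising from moving $u_k$ past the other $u_j$'s in $\Lambda[u_1,\ldots,u_m]$ can be reconciled with the standard simplicial orientation convention on $\widetilde C^*(\sK_J)$, a purely combinatorial verification. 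Once this is done, taking cohomology of $\phi$ delivers the bigraded formula $H^{-i,2J}(\zk)\cong\widetilde H^{|J|-i-1}(\sK_J)$, and summing over all $J\subset[m]$ with $-i+2|J|=k$ yields the stated decomposition of $H^k(\zk)$.
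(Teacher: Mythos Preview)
Your proposal is correct and follows essentially the same approach as the paper: the paper's proof simply notes that $d$ preserves the $\Z^m$-part of the multigrading on $R^*(\sK)$, asserts that the cohomology of $R^{*,2J}(\sK)$ is $\widetilde H^*(\sK_J)$ up to a shift, and then sums. You have spelled out in detail the isomorphism $R^{*,2J}(\sK)\cong\widetilde C^{*}(\sK_J)$ (including the sign bookkeeping) that the paper leaves to the reader, but the underlying argument is identical.
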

\begin{proof}
The second formula follows from the fact that the differential $d$
preserves the $\Z^m$ part of the $\Z\oplus\Z^m$-multigrading in
$R^*(\sK)$, and the cohomology of $R^{*,2J}(\sK)$ is isomorphic to
$\widetilde H^*(\sK_J)$ with a shift in dimension. The first
formula is obtained by summation.
\end{proof}

\begin{remark}
We also obtain that
\[
  \Tor^{-i}_{\Z[v_1,\ldots,v_m]}(\Z[\sK],\Z)\cong\bigoplus_{J\subset[m]}
  \widetilde H^{|J|-i-1}(\sK_J),
\]
which is known in combinatorial commutative algebra as the
\emph{Hochster formula}.

The multiplication in $H^*(\zk)$ may be also described in terms of
full subcomplexes of~$\sK$: the product of $\alpha\in
H^{-i,2J}(\zk)$ and $\beta\in H^{-k,2L}(\zk)$ is zero if $J\cap
L\ne\varnothing$, and otherwise $\alpha\cdot\beta$ is given by a
certain element in $H^{|J|+|L|-i-k-1}(\sK_{J\sqcup L})$,
see~\cite[\S5.1]{pano08}.
\end{remark}

\begin{corollary}
We have
\[
  H^k(\zp)\cong\bigoplus_{J\subset[m]}
  \widetilde H^{k-|J|-1}(P_J)
  \quad\text{and}\quad
  H^{-i,2J}(\zp)\cong\widetilde
  H^{|J|-i-1}(P_J),
\]
where $P_J=\bigcup_{j\in J}F_j\subset P$.
\end{corollary}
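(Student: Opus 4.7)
The plan is to reduce this corollary to the previous one, applied to the simplicial complex $\sK=\sK_P=\partial P^*$, and then translate between the full subcomplex $(\sK_P)_J$ and the closed subset $P_J\subset P$ via the nerve lemma.

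First I would recall that for a simple polytope $P$ we have a $\T^m$-equivariant homeomorphism $\zp\cong\mathcal Z_{\sK_P}$, where $\sK_P=\partial P^*$ is the simplicial complex on $[m]$ whose simplices are exactly those $I\subset[m]$ with $\bigcap_{i\in I}F_i\ne\varnothing$. Applying the previous corollary to $\sK=\sK_P$ immediately gives
\[
  H^{-i,2J}(\zp)\cong\widetilde H^{|J|-i-1}\bigl((\sK_P)_J\bigr),
\]
with the ungraded version obtained by summation over $J$. Thus the whole statement follows once we identify $\widetilde H^*\bigl((\sK_P)_J\bigr)\cong\widetilde H^*(P_J)$.

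The key step is the nerve-theoretic observation that $(\sK_P)_J$ is the nerve of the closed covering of $P_J=\bigcup_{j\in J}F_j$ by the facets $\{F_j\}_{j\in J}$. Indeed, a subset $I\subset J$ is a simplex of $(\sK_P)_J$ precisely when $\bigcap_{i\in I}F_i$ is a nonempty face of~$P$. Since $P$ is simple, every nonempty intersection of facets is a face of $P$, hence a convex set and therefore contractible; in particular the cover $\{F_j\}_{j\in J}$ of $P_J$ is a good closed cover. The nerve lemma (in its version for good closed covers of a compact polyhedron, which applies here because $P_J$ is a finite union of convex polytopes glued along faces) then yields a homotopy equivalence $(\sK_P)_J\simeq P_J$, and in particular $\widetilde H^*\bigl((\sK_P)_J\bigr)\cong\widetilde H^*(P_J)$.

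The only genuine issue is verifying the hypothesis of the nerve lemma, i.e.\ that all nonempty intersections $F_{i_1}\cap\cdots\cap F_{i_k}$ with $\{i_1,\dots,i_k\}\subset J$ are contractible; this is immediate from the simple-polytope assumption, as such an intersection is either empty or a face of $P$ of codimension~$k$. Plugging this equivalence into the formulas of the previous corollary and substituting $\zk=\zp$ gives both stated isomorphisms.
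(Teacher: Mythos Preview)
Your proof is correct and follows the same overall strategy as the paper: reduce to the preceding corollary for $\sK=\sK_P$, and then identify $\widetilde H^*\bigl((\sK_P)_J\bigr)$ with $\widetilde H^*(P_J)$. The only difference is in how that identification is established. The paper does it by observing that the barycentric subdivision of $\sK_P=\partial P^*$ coincides with the barycentric subdivision of $\partial P$, so that $P_J=\bigcup_{j\in J}F_j$ visibly deformation retracts onto the subcomplex realising $(\sK_P)_J$. You instead invoke the nerve lemma for the good closed cover $\{F_j\}_{j\in J}$, noting that nonempty intersections of facets are faces of $P$ and hence contractible. These two arguments are essentially the same fact seen from two angles: the barycentric-subdivision retraction is a concrete instantiation of the nerve lemma in this setting. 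Your route is a bit more black-box but perfectly sound; the paper's is more geometric and avoids citing an external theorem. One small remark: the contractibility of nonempty intersections of facets does not actually require $P$ to be simple (any such intersection is a face, hence convex); simplicity is what makes $\sK_P$ simplicial in the first place.
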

\begin{proof}
By considering the barycentric subdivision of $\sK=\sK_P$ (which
is also the barycentric subdivision of $\partial P$) we observe
that the union of facets $\bigcup_{j\in J}F_j$ retracts
onto~$\sK_J$.
\end{proof}

\section{Some homotopy and diffeomorphism types}
We start with an example of the cohomology ring calculation using
Theorem~\ref{zkcoh}.

\begin{example}
1. Let $P$ be a pentagon. Then $\dim\zp=7$ and
\[
  \Z[\sK_P]=\Z[v_1,\ldots,v_5]/(v_iv_j\colon j-i=2\mod 5).
\]
We have the following nontrivial cohomology groups
\begin{align*}
H^0(\zp)&\cong\Z,&&
\text{generated by }1\in R^*(\sK_P)\\
H^3(\zp)&\cong\Z^5,&&\text{generated by }[u_iv_j]\in
R^*(\sK_P)\text{ for }j-i=2\mod5\\
H^4(\zp)&\cong\Z^5,&&\text{generated by }[u_iu_jv_k]\in
R^*(\sK_P)\text{ for }k-i=3,\;k-j=2\mod5\\
H^7(\zp)&\cong\Z,&& \text{generated by }[u_1u_2u_3v_4v_5]\in
R^*(\sK_P).
\end{align*}
The multiplication is also easily determined (e.g.,
$[u_2v_4]\cdot[u_2u_3v_5]=0$ and
$[u_2u_3v_5]\cdot[u_4v_1]=[u_1u_2u_3v_4v_5]$), and we obtain the
following isomorphism of rings:
\[
  H^*(\zp)\cong H^*\bigl((S^3\times S^4)^{\#5}\bigr),
\]
where $M^{\#k}$ denotes the connected sum of $k$ copies of the
manifold~$M$.

2. A similar calculation shows that if $P$ is an $m$-gon, then
\begin{equation}\label{zpcs}
  H^*(\zp)\cong H^*\Bigl(\mathop{\#}_{k=3}^{m-1}
  \bigl(S^k\times S^{m+2-k}\bigr)^{\#(k-2)\binom{m-2}{k-1}}\Bigr).
\end{equation}
\end{example}

In fact, the cohomology ring isomorphism of~\eqref{zpcs} is
induced by a diffeomorphism, so that the moment-angle manifolds
corresponding to polygons are connected sums of sphere products,
with 2 spheres in each product. This description of the
diffeomorphism type of $\zp$ admits the following generalisation
to a series of higher-dimensional polytopes.

Let $P$ be given by~\eqref{ptope} and let $\mb v\in P$ be a
vertex. Choose a hyperplane $\{\mb x\colon (\mb a,\mb x)+b=0\}$
separating $\mb v$ from the other vertices of $P$, i.e. $(\mb
a,\mb v)+b<0$ and $(\mb a,\mb v')+b>0$ for any other vertex $\mb
v'\in P$. We refer to the polytope $P'$ obtained by adding the
inequality $(\mb a,\mb x)+b\ge0$ to~\eqref{ptope} as a
\emph{vertex cut} of~$P$.

\begin{theorem}[{essentially McGavran, see~\cite[Th.~6.3]{bo-me06}}]
Let $P$ be a polytope obtained from a simplex $\Delta^n$ by
applying vertex cut operation $m-n-1$ times. Then $\zp$ is
diffeomorphic to the following connected sum of sphere products:
\[
  \mathop{\#}_{k=3}^{m-n+1}
  \bigl(S^k\times S^{m+n-k}\bigr)^{\#(k-2)\binom{m-n}{k-1}}.
\]
\end{theorem}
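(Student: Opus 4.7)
The natural strategy is induction on $q = m - n - 1$, the number of vertex cuts needed to produce $P$ from $\Delta^n$. For the base case $q=0$ we have $P = \Delta^n$ with $m = n+1$ and $\zp = S^{2n+1}$ by the example following Construction~\ref{2ndme}. The claimed formula has empty index range $k = 3,\ldots,2$, which is naturally interpreted as a single sphere $S^{m+n} = S^{2n+1}$ of the correct dimension, so the base case holds.

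For the inductive step, assume the result for a polytope $P$ with $m$ facets obtained from $\Delta^n$ by $q-1$ vertex cuts. Let $P'$ be obtained from $P$ by cutting a vertex $v = F_{i_1}\cap\ldots\cap F_{i_n}$, so that $P'$ has $m+1$ facets and $\dim\mathcal Z_{P'} = m+n+1$. Combinatorially, $\sK_{P'}$ is the stellar subdivision of $\sK_P$ at the maximal simplex $I_v = \{i_1,\ldots,i_n\}$ with new vertex $m+1$. The key geometric step is to describe how $\mathcal Z_{P'}$ is built from $\zp$. I would work in the quadric presentation of Section~\ref{fqtp}: going from $P$ to $P'$ adds one new coordinate $z_{m+1}$ and one new real homogeneous quadric to the link description, with the new quadric supported near the image of the cut vertex. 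A Morse-theoretic analysis of this one-parameter family of quadrics (as in McGavran's original approach and its reworking in~\cite{bo-me06}) shows that $\mathcal Z_{P'}$ is obtained from $\zp$ by a surgery localised near the torus orbit $\T^{m-n}$ that is the preimage of $v$ under the quotient $\zp \to P$. Concretely, the piece $B_v \cong \D^n \times \T^{m-n}$ of $\zp$ is excised and replaced by the union of the $n$ pieces of $\mathcal Z_{P'}$ associated with the vertices of the new simplicial facet $F_{m+1}$.

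Assuming this surgery description, the manifold-level effect is to replace the part of the connected-sum decomposition of $\zp$ supported near $v$ with additional sphere-product factors of appropriately shifted dimensions. The bookkeeping then reduces to Pascal's rule
\[
  \binom{m-n+1}{k-1} = \binom{m-n}{k-1} + \binom{m-n}{k-2},
\]
which splits the multiplicity $(k-2)\binom{m-n+1}{k-1}$ appearing in the formula for $\mathcal Z_{P'}$ into a contribution inherited from $\zp$ (after the dimension shift induced by the extra coordinate) and a new contribution $(k-2)\binom{m-n}{k-2}$ coming from the vertex cut itself; this accounts in particular for the new top-index factor at $k = m-n+2$.

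The principal obstacle is the surgery step itself. Since $\dim\mathcal Z_{P'} > \dim\zp$, there is no naive surgery inside a fixed ambient dimension; one must carefully track the topology change across the introduction of the new quadric and verify that the excision of $B_v$ and reattachment of the new pieces actually realises the insertion of the claimed sphere-product factors rather than something more exotic. A further subtlety is ensuring that the iterated connected-sum structure accumulated over earlier cuts remains undisturbed by later cuts, so that the final manifold is globally a single iterated connected sum of sphere products and not a more complicated nested surgery. This would be the main place to either commit to the quadric-Morse handle-decomposition approach or to a direct combinatorial analysis of the cubical decomposition of $\mathcal Z_{P'}$ induced by the stellar subdivision of $\sK_P$ at $I_v$.
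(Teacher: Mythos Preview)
The paper does not actually prove this theorem: it is stated with attribution to McGavran via~\cite[Th.~6.3]{bo-me06} and then immediately used, so there is no ``paper's own proof'' to compare your proposal against.

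As for the proposal itself, you are candid that it is a plan rather than a proof, and the gap you identify is real. The inductive strategy on the number of vertex cuts is the natural one and is indeed how the McGavran-type arguments run, but the crucial surgery step cannot be carried out as you describe it. Since passing from $P$ to $P'$ increases $m$ by one, $\mathcal Z_{P'}$ has dimension $m+n+1$ while $\zp$ has dimension $m+n$, so there is no surgery relating them directly. The standard device in the literature is to compare $\mathcal Z_{P'}$ not with $\zp$ but with $\zp\times S^1$: by Proposition~\ref{easyzp}(b), adding a \emph{redundant} inequality to~\eqref{ptope} multiplies the moment-angle manifold by a circle, and a vertex cut may be realised as a one-parameter deformation of that redundant inequality into an irredundant one. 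This places both manifolds in $\C^{m+1}$ with the same dimension, and the change in topology as the new hyperplane crosses the vertex $v$ becomes an honest equivariant surgery that can be analysed via the quadric presentation. Your Pascal-rule bookkeeping then has to be redone to track the extra $S^1$ factor (noting that $S^k\times S^l\times S^1$ is \emph{not} a connected sum of sphere products, so the induction hypothesis must be formulated with some care). Without this or an equivalent trick, the step you flag as ``the principal obstacle'' is not merely technical but is the entire content of the theorem.
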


For $n=2$ we obtain the diffeomorphism behind cohomology
isomorphism~\eqref{zpcs}. There are also other polytopes $P$ for
which $\zp$ is diffeomorphic to a connected sum of sphere
products, see~\cite{gi-lo09}. However, in general the topology of
$\zp$ is much more complicated, as is shown by the next example.

\begin{example}[{Baskakov~\cite{bask03}, see
also~\cite[\S5.3]{pano08}}] Let $P$ be a 3-dimensional polytope
obtained from the cube by cutting off two non complanar edges.
(The edge cut operation is defined similarly to the vertex cut, by
choosing a hyperplane separating the edge from the other vertices
of the polytope.) The corresponding $\zp$ is an 11-dimensional
manifold. It has a nontrivial \emph{triple Massey product} of
3-dimensional cohomology classes. It follows that $\zp$ is not
\emph{formal} in the sense of the rational homotopy theory; in
particular, $\zp$ cannot be diffeomorphic to a connected sum of
sphere products.
\end{example}

Now let us consider some non polytopal examples.

\begin{example}
Let $\sK$ consist of $m$ disjoint points. Then $\zk$ is the space
of Example~\ref{complexam}.2 and it is homotopy equivalent to the
complement $U(\sK)$ to all codimension-two coordinate planes
in~$\C^m$, see Example~\ref{complexam}. We have
\[
  \Z[\sK]=\Z[v_1,\ldots,v_m]/(v_iv_j\colon 1\le i<j\le m).
\]
The subspace of $(k+1)$-dimensional cocycles in $R^*(\sK)$ is
generated by the monomials
\[
  u_{i_1}u_{i_2}\cdots u_{i_{k-1}}v_{i_k},\quad k\ge2\text{ and }
  i_p\ne i_q\text{ for }p\ne q,
\]
and has dimension $m\binom{m-1}{k-1}$. The subspace of
coboundaries is generated by the elements of the form
\[
  d(u_{i_1}\cdots u_{i_k})
\]
and is $\binom mk$-dimensional. Therefore
\[
\begin{array}{l}
  \dim H^{0}(\zk)=1,\\[2mm]
  \dim H^{1}(\zk)=H^{2}(\zk)=0,\\[2mm]
  \dim H^{k+1}(\zk)=
  m\binom{m-1}{k-1}-\binom mk=(k-1)\binom mk,
  \quad 2\le k\le m,
\end{array}
\]
and multiplication in the cohomology of $\zk$ is trivial. We
therefore have an isomorphism of rings
\begin{equation}\label{ukwed}
  H^*(\zk)\cong
  H^*\Bigl(\bigvee_{k=2}^m
  \bigl(S^{k+1}\bigr)^{\vee(k-1)\binom mk}\Bigr),
\end{equation}
where $X^{\vee k}$ denotes the wedge of $k$ copies of the
space~$X$. In fact, this cohomology ring isomorphism is induced by
a homotopy equivalence:
\end{example}

\begin{theorem}[{Grbi\'c--Theriault \cite[Cor.~9.5]{gr-th07}}]
Let $\sK$ be the $i$-dimensional skeleton of a simplex
$\Delta^{m-1}$, so that $U(\sK)$ is the complement to all
codimension $(i+2)$ coordinate planes in~$\C^m$. Then $U(\sK)$ has
the homotopy type of the wedge of spheres
\[
  \bigvee_{k=i+2}^{m}\bigl(S^{i+k+1}\bigr)^{\vee\binom mk
  \binom{k-1}{i+1}}.
\]
\end{theorem}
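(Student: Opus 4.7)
The plan is to invoke Theorem~8.9 to replace $U(\sK)$ by the moment-angle complex~$\zk$, then to compute $H^\ast(\zk)$ additively, verify that all cup products vanish, and finally to realise the cohomology generators by maps from spheres and apply Whitehead's theorem.

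For the additive cohomology, I apply the corollary to Theorem~3.4:
\[
  H^q(\zk) \cong \bigoplus_{J \subset [m]} \widetilde H^{q-|J|-1}(\sK_J).
\]
Whenever $|J| = k$, the full subcomplex $\sK_J$ is the $i$-skeleton of the simplex on~$J$. A direct calculation via the long exact sequence of the pair $(\Delta^{k-1}, \sK_J)$, together with contractibility of $\Delta^{k-1}$, shows that $\widetilde H^\ast(\sK_J) \cong \Z^{\binom{k-1}{i+1}}$ is concentrated in degree $i+1$ when $k \ge i+2$, and vanishes otherwise. Only the terms with $q - k - 1 = i+1$ survive, so $H^{k+i+2}(\zk) \cong \Z^{\binom{m}{k}\binom{k-1}{i+1}}$ for $k = i+2,\ldots,m$, matching the cohomology of the proposed wedge.

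For the ring structure, the multiplicative refinement of the same formula says that $\alpha \in H^{-r,2J}(\zk)$ and $\beta \in H^{-s,2L}(\zk)$ multiply to zero unless $J \cap L = \varnothing$, in which case $\alpha \cdot \beta$ lies in $\widetilde H^{|J|+|L|-r-s-1}(\sK_{J\sqcup L})$. In our gradings $r = |J|-i-2$ and $s=|L|-i-2$, so a nonzero product would have to live in degree $2i+3$ of the cohomology of an $i$-skeleton of a simplex, whereas the latter is concentrated in degree $i+1 < 2i+3$. Thus every cup product vanishes, as in the cohomology of a wedge.

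The deepest step is the passage from an additive isomorphism to a homotopy equivalence. Since $\sK$ contains every element of $[m]$ as a vertex, $\zk$ is simply connected, and the calculation above shows it is $(2i+2)$-connected with torsion-free cohomology. To build a map $f \colon \bigvee_k (S^{i+k+1})^{\vee \binom{m}{k}\binom{k-1}{i+1}} \to \zk$ inducing an isomorphism on cohomology, I would realise each generator of $H^{k+i+2}(\zk)$ supported on a subset $J \subset [m]$ as a spherical class, using the retraction $\mathcal Z_{\sK_J} \hookrightarrow \zk$ coming from the inclusion of the full subcomplex, and then lifting the generators of $\widetilde H^{i+1}(\sK_J)$ to maps from $S^{i+k+1}$ into $\mathcal Z_{\sK_J}$. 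Wedging these maps and applying Whitehead's theorem then concludes the proof. The main obstacle is precisely this geometric realisation of each cohomology generator by a spherical map; this is the core of Grbi\'c--Theriault's argument and is handled by induction on~$i$, using the cofibration relating $\mathcal Z_{\mathrm{skel}_{i-1}\Delta^{m-1}}$ and $\mathcal Z_{\mathrm{skel}_{i}\Delta^{m-1}}$ together with a careful analysis of the attaching maps to identify the resulting wedge summands.
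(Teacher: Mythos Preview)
The paper does not give its own proof of this theorem; it is stated as a citation from Grbi\'c--Theriault with no argument supplied. So there is no ``paper's proof'' to compare against, only the original source.

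Your proposal has one minor error and one genuine gap. The minor error is an off-by-one: the $i$-skeleton of $\Delta^{k-1}$ is an $i$-dimensional complex, so its reduced cohomology is concentrated in degree~$i$, not~$i+1$. With the correct degree you get $q-k-1=i$, hence $H^{i+k+1}(\zk)\cong\Z^{\binom mk\binom{k-1}{i+1}}$, which actually matches the spheres $S^{i+k+1}$; as written, your $H^{k+i+2}$ does not.

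The genuine gap is the last step. Your cohomology-ring computation is routine from Theorem~\ref{zkcoh} and its corollary, and the cup-product vanishing is fine (after fixing the degree shift). But a simply connected space with free cohomology and trivial cup products need not be a wedge of spheres: one must still show every generator is spherical, and you do not do this. Your sketch---restrict to $\mathcal Z_{\sK_J}$ and lift generators there---merely reproduces the same problem for a smaller complex, and you then defer to ``the core of Grbi\'c--Theriault's argument'' via an unspecified induction and cofibration analysis. That is precisely the content of the theorem, and it is not a cohomological statement: the actual proof in~\cite{gr-th07} proceeds by direct homotopy-theoretic constructions (filtrations of polyhedral products and identification of homotopy cofibres), not by computing $H^*$ first and invoking Whitehead afterwards. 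So what you have written is the easy half of the problem together with an acknowledgment that the hard half lies elsewhere.
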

For $i=0$ we obtain the homotopy equivalence behind cohomology
isomorphism~\eqref{ukwed}. It is also shown in~\cite{gr-th07} that
$U(\sK)$ has the homotopy type of a wedge of spheres for a wider
class of simplicial complexes, including~\emph{shifted complexes}.

\begin{example}
Let $P$ be a polytope obtained from $\Delta^n$ by applying the
vertex cut operation $p-1$ times, and let $\sK$ consist of $p$
disjoint points. Then the numbers of spheres and their dimensions
in the connected sum $\zp$ correspond to the numbers of spheres
and their dimensions in the wedge $\zk$:
\[
  \zp\cong\mathop{\#}_{k=3}^{p+1}
  \bigl(S^k\times S^{p+2n-k}\bigr)^{\#(k-2)\binom{p}{k-1}},\quad
  \zk\cong\bigvee_{k=3}^{p+1}\bigl(S^k\bigr)^{\vee(k-2)\binom p{k-1}}.
\]
For instance, for $n=3$ and $p=4$ we get
\[
\begin{aligned}
  \zp&\cong\bigl(S^3\times S^7\bigr)^{\#6}\#\bigl(S^4\times S^6\bigr)^{\#8}
  \#\bigl(S^5\times S^5\bigr)^{\#3},\\
  \zk&\cong\bigl(S^3\bigr)^{\vee6}\vee\bigl(S^4\bigr)^{\vee8}
  \vee\bigl(S^5\bigr)^{\vee3}.
\end{aligned}
\]
The nature of this correspondence is yet to be fully understood.
\end{example}

\newpage

\part{Complex-analytic structures on moment-angle manifolds}
Here we review a construction of Bosio--Meersseman~\cite{bo-me06},
which endows an even-dimensional moment-angle manifold $\zp$ with
a non K\"ahler complex-analytic structure of an
\emph{LVM-manifold}. We finish by obtaining some new information
about the Dolbeault cohomology and Hodge numbers of these complex
structures on~$\zp$.

\section{LVM-manifolds}
Let $P$ be a simple polytope~\eqref{ptope} and $\zp$ the
corresponding moment-angle manifold. For simplicity we shall
identify $\zp$ with its embedding $i_Z(\zp)$ in~$\C^m$. As
detailed in Lecture~I, we may describe $\zp$ as an intersection of
$m-n-1$ homogeneous real quadrics with a unit sphere in~$\C^m$:
\[
  \zp=\left\{\begin{array}{ll}
  \mb z\in\C^m\colon&\sum_{k=1}^m\mb g_k|z_k|^2=\textbf 0,\\[2mm]
  &\sum_{k=1}^m|z_k|^2=1,
  \end{array}\right\}
\]
where $\mb g_1,\ldots,\mb g_m$ is a set of vectors in $\R^{m-n-1}$
satisfying conditions~(i) and~(ii) of Section~\ref{fqtp}.

Assume that $m-n-1$ is even an let $m-n-1=2s$. The transpose of
the $2s\times m$ matrix $\varGamma^\star=(\mb g_1,\ldots,\mb
g_m)=(\gamma_{jk})$ defines an inclusion of a $2s$-dimensional
subspace in~$\R^m$, which we denote~$V$. We choose a complex
$s\times m$ matrix $\varOmega=(\omega_{jk})$ such that the image
of the $\R$-linear map
$\C^s\stackrel{\varOmega^t}\llra\C^m\stackrel{\mathrm{Re}}\llra\R^m$
is exactly~$V$. Let $\omega_k\in\C^s$ denote the $k$th column of
$\varOmega$, so that $\varOmega=(\omega_1,\ldots,\omega_m)$. A
sample choice of $\varOmega$ is as follows:
$\omega_{jk}=\gamma_{2j-1,k}+i\gamma_{2j,k}$ for $1\le j\le s$ and
$1\le k\le m$.

We may now use the complex vectors $\omega_k\in\C^s$ instead of
the real vectors $\mb g_k\in\R^{2s}$ in the presentation of $\zp$:
\begin{equation}\label{zpomega}
  \zp=\left\{\begin{array}{ll}
  \mb z\in\C^m\colon&\sum_{k=1}^m\omega_k|z_k|^2=\textbf 0,\\[2mm]
  &\sum_{k=1}^m|z_k|^2=1,
  \end{array}\right\}
\end{equation}

Now define the manifold $N$ as the projectivisation of the
intersection of homogeneous quadrics in~\eqref{zpomega}:
\begin{equation}\label{ndef}
N:=\bigl\{\mb z\in\C
P^{m-1}\colon\omega_1|z_1|^2+\ldots+\omega_m|z_m|^2=\textbf
0\bigr\},\quad\omega_k\in\C^s.
\end{equation}

We therefore have a principal $S^1$-bundle $\zp\to N$.

\begin{theorem}[\cite{meer00}]\label{thlvm}
The manifold $N$ has a holomorphic atlas describing it as a
complex manifold of complex dimension $m-1-s$.
\end{theorem}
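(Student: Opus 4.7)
The plan is to realize $N$ as a smooth cross-section of a free, proper, holomorphic action of $\C^s$ on an open subset $U\subset\C P^{m-1}$, thereby transferring the complex structure of the quotient $U/\C^s$ to $N$. First I would define the holomorphic action of $\C^s$ on $\C^m\setminus\{\mathbf 0\}$ by
\[
  T\cdot\mb z=\bigl(e^{\langle\omega_1,T\rangle}z_1,\,\ldots,\,e^{\langle\omega_m,T\rangle}z_m\bigr),
  \qquad \langle\omega_k,T\rangle=\sum_{j=1}^s\omega_{jk}T_j.
\]
Since this commutes with scalar multiplication, it descends to a holomorphic action on $\C P^{m-1}$. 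I take $U\subset\C P^{m-1}$ to be the $\C^s$-invariant open set obtained by removing the coordinate subspaces $\{z_{i_1}=\ldots=z_{i_k}=0\}$ indexed by non-simplices of $\sK_P$; equivalently, $[\mb z]\in U$ iff $\mathbf 0\in\conv(\omega_k\colon z_k\ne 0)$. Conditions~(i) and~(ii) from Section~\ref{fqtp} guarantee that $N\subset U$.

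The crux is the \emph{slice property}: every $\C^s$-orbit in $U$ meets $N$ in exactly one point. Given $[\mb z]\in U$, finding $T\in\C^s$ with $T\cdot\mb z\in N$ reduces (after rescaling so that the unit-sphere equation also holds) to solving
\[
  \sum_{k=1}^m\omega_k\,e^{2\Re\langle\omega_k,T\rangle}|z_k|^2=\mathbf 0,
\]
which is the vanishing of the $\R^{2s}$-gradient (identifying $\C^s$ with $\R^{2s}$) of the smooth function
\[
  F_{\mb z}(T)=\sum_{k=1}^m|z_k|^2\,e^{2\Re\langle\omega_k,T\rangle}.
\]
Using the Siegel-type conditions~(i) and~(ii) on $\omega_1,\ldots,\omega_m$, I would show that $F_{\mb z}$ is strictly convex and proper on $\C^s$, so it has a unique critical point. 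This simultaneously gives existence and uniqueness of $T$, and also implies freeness and properness of the $\C^s$-action on~$U$.

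With the slice property in hand, the orbit projection $U\to U/\C^s$ restricts to a diffeomorphism $N\cong U/\C^s$. The standard quotient theorem for free proper holomorphic actions endows $U/\C^s$ with the structure of a Hausdorff complex manifold of complex dimension $(m-1)-s$, and pulling back its holomorphic atlas along this diffeomorphism equips $N$ with a complex atlas of the required dimension. The principal obstacle is the convexity-and-properness estimate for $F_{\mb z}$; once that is secured, freeness, properness and the slice property all come out of the same analysis, and the rest of the argument is routine.
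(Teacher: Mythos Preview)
Your proposal is correct and follows essentially the same route as the paper's sketch. The paper defines the identical holomorphic $\C^s$-action, shows it is free on $U(\sK_P)$, and identifies the quotient with the set of minima of $|z_1|^2+\cdots+|z_m|^2$ along orbits---which, parametrised by $T$, is exactly your function $F_{\mb z}(T)$; it then projectivises at the end, whereas you projectivise first, but the key analytic step (unique minimum via convexity on each orbit) is the same.
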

\begin{proof}[Sketch of proof]
Consider a holomorphic action of $\C^s$ on $\C^m$ given by
\begin{equation}\label{csact}
\begin{aligned}
  \C^s\times\C^m&\longrightarrow\C^m\\
  (\mb w,\mb z)&\mapsto \bigl(z_1e^{\langle\omega_1,\mb
  w\rangle},\ldots,z_me^{\langle\omega_m,\mb w\rangle}\bigr),
\end{aligned}
\end{equation}
where $\mb w=(w_1,\ldots,w_s)\in\C^s$, and $\langle\omega_k,\mb
w\rangle=\omega_{1k}w_1+\ldots+\omega_{sk}w_s$.

An argument similar to that of the proof of Lemma~\ref{freeact}
shows that the restriction of the action~\eqref{csact} to
$U(\sK_P)\subset\C^m$ is free. (Here $U(\sK_P)$ is the complement
of the coordinate subspace arrangement determined by~$\sK_P$, see
Section~\ref{csac}.) Using a holomorphic atlas transverse to the
orbits of the free action of $\C^s$ on the complex manifold
$U(\sK_P)$ we obtain that the quotient $U(\sK_P)/\C^s$ has a
structure of a complex manifold.

On the other hand, it may be shown that the function
$|z_1|^2+\ldots+|z_m|^2$ on $\C^m$ has a unique minimum when
restricted to an orbit of the free action of $\C^s$ on $U(\sK_P)$.
The set of these minima can be described as
\[
  T:=\bigl\{\mb z\in\C^m\setminus\!\!\{\mathbf 0\}
  \colon\;\omega_1|z_1|^2+\ldots+\omega_m|z_m|^2=\textbf 0\bigr\}.
\]
I follows that the quotient $U(\sK_P)/\C^s$ may be identified
with~$T$, and therefore $T$ acquires a structure of a complex
manifold of dimension $m-s$.

This construction may be projectivised by replacing $\C^m$ by $\C
P^{m-1}$ and $U(\sK_P)$ by the complement to an arrangement in $\C
P^{m-1}$. Therefore, $N$ also becomes a complex manifold.
\end{proof}

The manifold $N$ endowed with the complex structure of
Theorem~\ref{thlvm} is referred to as an \emph{LVM-manifold}.
These manifolds were described by Meersseman~\cite{meer00} as a
generalisation of the construction of Lopez de
Medrano--Verjovsky~\cite{lo-ve97}.

\begin{remark}
The embedding of $T$ in $\C^m$ and of $N$ in $\C P^{m-1}$ given
by~\eqref{ndef} is not holomorphic.
\end{remark}

\section{$\zp$ as an LVM-manifold}
By using the principal $S^1$-bundle $\zp\to N$ and playing with
redundant inequalities one may also endow $\zp$ (if its dimension
$m+n$ is even) or $\zp\times S^1$ (if $m+n$ is odd) with a
structure of an LVM-manifold. We first summarise the effects that
a redundant inequality in~\eqref{ptope} has on different spaces
appeared above.

\begin{proposition}
The following conditions are equivalent:
\begin{itemize}
\item[(a)] $\langle\mb a_i,\mb x\:\rangle+b_i\ge0$ is a redundant
inequality in~\eqref{ptope} (i.e. $F_i=\varnothing$);
\item[(b)] $\zp\subset\{\mb z\in\C^m\colon z_i\ne0\}$;
\item[(c)] $U(\sK_P)$ has a factor $\C^\times$ on the $i$th
coordinate;
\item[(d)] $\mathbf 0\notin\conv(\omega_k\colon k\ne i)$.
\end{itemize}
\end{proposition}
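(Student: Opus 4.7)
The plan is to establish the three equivalences (a)$\Leftrightarrow$(b), (a)$\Leftrightarrow$(c) and (a)$\Leftrightarrow$(d) separately, each being an essentially direct consequence of material already developed in the notes. For (a)$\Leftrightarrow$(b) I would simply invoke Proposition~\ref{easyzp}(a): the $i$th coordinate of a point $i_Z(z)\in\C^m$ vanishes if and only if its image in $P$ lies in $F_i$, so $\zp$ meets $\{z_i=0\}$ precisely when $F_i\ne\varnothing$, and $F_i=\varnothing$ iff $z_i\ne 0$ throughout $\zp$.

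For (a)$\Leftrightarrow$(c) I would unwind the definitions of $\sK_P$ and $U(\sK_P)$. According to the discussion preceding the definition of $\sK_P$, the condition $F_i=\varnothing$ is exactly that $\{i\}$ is a ghost vertex, i.e.\ $\{i\}\notin\sK_P$. When this holds, the hyperplane $L_{\{i\}}=\{z_i=0\}$ is among the subspaces removed in forming $U(\sK_P)$; any other non-face of $\sK_P$ either avoids $i$, and is then a non-face of the restricted complex $\sK':=\sK_P\cap 2^{[m]\setminus\{i\}}$, or contains $i$, in which case its coordinate subspace already lies inside $L_{\{i\}}$ and contributes nothing new. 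This yields the product decomposition $U(\sK_P)\cong\C^\times\times U(\sK')$. Conversely, a $\C^\times$ factor on the $i$th coordinate forces $L_{\{i\}}$ to lie inside the removed arrangement; since a linear hyperplane is not a finite union of its proper subspaces, $\{i\}$ must itself be a non-face, so $F_i=\varnothing$.

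For (a)$\Leftrightarrow$(d) I would apply Lemma~\ref{propgd} with $k=1$ and $\{i_1\}=\{i\}$ to get $F_i\ne\varnothing$ iff $\mathbf 0\in\conv(\mb g_j:j\ne i)$, and then show that this is equivalent to the analogous condition on the $\omega_j$. Both conditions are witnessed by a nonnegative real vector $(\lambda_1,\ldots,\lambda_m)$ with $\lambda_i=0$ and $\sum_j\lambda_j=1$ lying in the real kernel of $\varGamma^\star$ (respectively of $\varOmega$). Writing $\varOmega=A+iB$ with real matrices $A,B\in\R^{s\times m}$, the defining property of $\varOmega$ is that the columns of $A^T$ and $B^T$ together span the same subspace $V\subset\R^m$ as the columns of $(\varGamma^\star)^T$; hence the real kernels of $\varOmega$ and $\varGamma^\star$ coincide, both equalling $V^\perp$, and the two convex-hull conditions agree.

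The only point requiring attention is this last compatibility step, the passage $\mb g_j\leftrightarrow\omega_j$ in (a)$\Leftrightarrow$(d); everything else is bookkeeping against definitions.
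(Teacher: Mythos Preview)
Your proposal is correct and follows essentially the same route as the paper: (a)$\Leftrightarrow$(b)$\Leftrightarrow$(c) from the definitions of $\zp$ and $U(\sK_P)$, and (a)$\Leftrightarrow$(d) via Lemma~\ref{propgd}. You supply considerably more detail than the paper's two-line proof---in particular the explicit product decomposition for (c) and the $\mb g_j\leftrightarrow\omega_j$ compatibility via $V^\perp$, which the paper leaves implicit---but the skeleton is identical.
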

\begin{proof}
The equivalence (a)$\Leftrightarrow$(b)$\Leftrightarrow$(c)
follows directly from the definition of~$\zp$ and~$U(\sK_P)$. The
equivalence (a)$\Leftrightarrow$(d) follows from
Lemma~\ref{propgd}.
\end{proof}

\begin{theorem}[{\cite[Th.~12.2]{bo-me06}}]\label{bometh}
Let $\zp$ be the moment angle manifold corresponding to an
$n$-dimensional simple polytope~\eqref{ptope} defined by $m$
inequalities.
\begin{itemize}
\item[(a)] If $m+n$ is even then $\zp$ has a complex structure as
an LVM-manifold.
\item[(b)] If $m+n$ is odd then $\zp\times S^1$ has a complex structure as
an LVM-manifold.
\end{itemize}
\end{theorem}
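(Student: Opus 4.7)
The plan is to realise $\zp$ (resp.~$\zp\times S^1$) as the LVM manifold $N$ of Theorem~\ref{thlvm} attached to an enlarged polytope obtained from $P$ by adding redundant inequalities, and then to trivialise the principal $S^1$-bundle $\zp_{P'}\to N$ by an explicit global slice. An LVM manifold built from an $n$-polytope with $\ell$ facets has real dimension $\ell+n-1$, and its construction requires $\ell-n-1$ to be even. For part~(a), $m+n$ even, I add one redundant inequality to~\eqref{ptope}: the resulting $P'$ has $\ell=m+1$, $\ell-n-1=m-n$ is even, and $\ell+n-1=m+n=\dim\zp$. For part~(b), $m+n$ odd forces a single addition to violate the parity condition, so I add two redundant inequalities to obtain $P''$ with $\ell=m+2$, $\ell-n-1=m-n+1$ even, and $\ell+n-1=m+n+1=\dim(\zp\times S^1)$.

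\medskip

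By Proposition~\ref{easyzp}(b) the augmented moment-angle manifolds satisfy $\zp_{P'}\cong\zp\times S^1$ and $\zp_{P''}\cong\zp\times S^1\times S^1$ as smooth $\T^\ell$-manifolds, the extra circle factors being the coordinate tori on the redundant coordinates. Extending $\varGamma$ in the natural way (appending a zero column for each new coordinate together with a row encoding the trivial relation $\mb a_\ell=\mb0$, then rescaling to the Construction~\ref{2ndme} normalisation) and passing to complex vectors $\omega'_k$, Theorem~\ref{thlvm} produces an LVM complex manifold $N'$ (resp.~$N''$) of real dimension $m+n$ (resp.~$m+n+1$), fibred as a principal $S^1$-bundle $\zp_{P'}\to N'$ (resp.~$\zp_{P''}\to N''$) by the diagonal subcircle of~$\T^\ell$.

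\medskip

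The final step is to identify the base smoothly with $\zp$ (resp.~$\zp\times S^1$). Write a point of $\zp_{P'}\cong\zp\times S^1$ as $(\mb z,w)$ with $\mb z\in\zp$ and $w$ the new $S^1$-coordinate; the diagonal circle acts by $e^{i\theta}\cdot(\mb z,w)=(e^{i\theta}\mb z,\,e^{i\theta}w)$. The subset $\{w\in\R_{>0}\}$ is a smooth global section of the bundle, and the assignment $[(\mb z,w)]\mapsto(\bar w/|w|)\,\mb z$ defines a $\T^m$-equivariant diffeomorphism $N'\xrightarrow{\cong}\zp$. Transporting the LVM complex structure of $N'$ along this diffeomorphism endows $\zp$ with the desired LVM structure. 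The identical argument applied to either of the two redundant circle factors in $\zp_{P''}$ yields $N''\cong\zp\times S^1$ and proves part~(b).

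\medskip

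The main technical obstacle is ensuring that the extended data $(\omega'_k)$ can be chosen so that conditions~(i) and~(ii) of Section~\ref{fqtp} continue to hold for $P'$ (resp.~$P''$): without them the intersection of quadrics~\eqref{ndef} degenerates and Theorem~\ref{thlvm} does not apply. This is where the redundancy of the added inequalities is used: they impose no new combinatorial constraints, so a generic lift of a valid $\omega$-configuration preserves the nondegeneracy conditions. Once the set-up is arranged, the slice argument in the final step is essentially formal, since the diagonal subcircle acts on each redundant coordinate by ordinary rotation of a nonzero complex number.
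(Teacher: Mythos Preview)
Your proposal is correct and follows essentially the same route as the paper: add one (resp.\ two) redundant inequalities to fix the parity of $m-n-1$, so that $\zp'\cong\zp\times S^1$ (resp.\ $\zp\times S^1\times S^1$), apply Theorem~\ref{thlvm} to the enlarged data, and then identify the LVM quotient $N'=\zp'/S^1$ with $\zp$ (resp.\ $\zp\times S^1$). The only stylistic difference is that where you invoke a genericity argument for conditions~(i) and~(ii) and an explicit slice for the quotient, the paper simply writes down the extended matrices ${\varGamma^\star}'$ directly and records $N'\cong(\zp\times S^1)/S^1\cong\zp$ without further comment; your concerns there are not real obstacles.
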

\begin{proof}
(a) Since $m+n$ is even, $m-n-1$ is odd. We add one redundant
inequality $1\ge0$ to~\eqref{ptope}, and denote the resulting
moment-angle manifold by $\zp'$. We have $\zp'\cong\zp\times S^1$,
and it follows from Construction~\ref{2ndme} that $\zp'$ is given
by
\[
  \left\{\begin{array}{lrcccrcr}
  \mb z\in\C^{m+1}\colon&\mb g_1|z_1|^2&+&\ldots&+&\mb g_m|z_m|^2&&=\mathbf 0,\\[1mm]
  &|z_1|^2&+&\ldots&+&|z_m|^2&-&|z_{m+1}|^2=0,\\[1mm]
  &|z_1|^2&+&\ldots&+&|z_m|^2&+&|z_{m+1}|^2=1,
  \end{array}\right\}
\]
where $\varGamma^\star=(\mb g_1\ldots\:\mb g_m)$ is the
$(m-n-1)\times m$ matrix of coefficients of the homogeneous
quadrics for~$\zp$. The corresponding matrix for $\zp'$ is
therefore
\[
  {\varGamma^\star}^\prime=\begin{pmatrix}\mb g_1&\ldots&\mb
  g_m&0\\1&\ldots&1&-1
  \end{pmatrix}.
\]
Its height is $m-n$ and therefore even, so that we may replace it
by an $s\times(m+1)$ complex matrix
$\varOmega=(\omega_1\ldots\:\omega_{m+1})$ where $m-n=2s$, and
define
\begin{equation}\label{nprimeeven}
  N':=\bigl\{\mb z\in\C P^m\colon
  \omega_1|z_1|^2+\ldots+\omega_{m+1}|z_{m+1}|^2=\textbf
  0\bigr\}.
\end{equation}
Then $N'$ has a complex structure as an LVM-manifold by
Theorem~\ref{thlvm}. On the other hand,
\[
  N'\cong\zp'/S^1=(\zp\times S^1)/S^1\cong\zp,
\]
and $\zp$ also has a complex structure.

\smallskip

(b) The proof here is similar, but we have to add two redundant
inequalities to~\eqref{ptope}. Then $\zp'\cong\zp\times S^1\times
S^1$ is given by
\[
  \left\{\begin{array}{lrcrclcr}
  \mb z\in\C^{m+2}\colon&\mb g_1|z_1|^2&+\;\ldots\;+&\mb g_m|z_m|^2&&&&=\mathbf 0,\\[1mm]
  &|z_1|^2&+\;\ldots\;+&|z_m|^2&-&|z_{m+1}|^2&&=0,\\[1mm]
  &|z_1|^2&+\;\ldots\;+&|z_m|^2&&&-&|z_{m+2}|^2=0,\\[1mm]
  &|z_1|^2&+\;\ldots\;+&|z_m|^2&+&|z_{m+1}|^2&+&|z_{m+2}|^2=1.
  \end{array}\right\}
\]
The matrix of coefficients of the homogeneous quadrics is
therefore
\[
  {\varGamma^\star}^\prime=\begin{pmatrix}\mb g_1&\ldots&\mb
  g_m&0&0\\1&\ldots&1&-1&0\\1&\ldots&1&0&-1
  \end{pmatrix}.
\]
We replace it by an $s\times(m+2)$ complex matrix
$\varOmega=(\omega_1\ldots\:\omega_{m+2})$ where $m-n+1=2s$, and
define
\begin{equation}\label{nprimeodd}
  N':=\bigl\{\mb z\in\C P^{m+1}\colon
  \omega_1|z_1|^2+\ldots+\omega_{m+2}|z_{m+2}|^2=\textbf
  0\bigr\}.
\end{equation}
Then $N'$ has a complex structure as an LVM-manifold by
Theorem~\ref{thlvm}. On the other hand,
\[
  N'\cong\zp'/S^1=(\zp\times S^1\times S^1)/S^1\cong\zp\times S^1,
\]
and $\zp\times S^1$ also has a complex structure.
\end{proof}

We demonstrate this construction on the two classical examples of
non K\"ahler complex manifolds.

\begin{example}[Hopf manifold]\label{hopf}
An example of a non K\"ahler compact complex manifold is provided
by the quotient of $\C^m\setminus\!\{\mathbf 0\}$ by the action of
$\Z$ generated by the coordinatewise multiplication by a complex
number $\tau$ such that $|\tau|\ne1$. The complex manifolds
obtained in this way are known as the \emph{Hopf manifolds}; they
are all diffeomorphic to $S^{2m-1}\times S^1$. If $m=1$ then the
Hopf manifold is a complex torus; otherwise it is not K\"ahler as
its second cohomology group is zero. The Hopf manifolds may be
obtained as particular cases of moment-angle manifolds with the
complex structures described above.

\hangindent=37mm \hangafter=11 Let $P$ be an $n$-simplex, so that
$m=n+1$, $\zp\cong S^{2n+1}$ is given by a single equation
$|z_1|^2+\ldots+|z_m|^2=1$ in $\C^m$, and $\varGamma^\star$ is
empty. Since $m+n$ is odd, we need to consider $\zp'\cong\zp\times
S^1\times S^1$ given by
\[
  \left\{\begin{array}{lrcrclcr}
  \mb z\in\C^{m+2}\colon
  &|z_1|^2&+\;\ldots\;+&|z_m|^2&-&|z_{m+1}|^2&&=0,\\[1mm]
  &|z_1|^2&+\;\ldots\;+&|z_m|^2&&&-&|z_{m+2}|^2=0,\\[1mm]
  &|z_1|^2&+\;\ldots\;+&|z_m|^2&+&|z_{m+1}|^2&+&|z_{m+2}|^2=1,
  \end{array}\right\}
\]
Then we replace the $2\times m+2$ matrix
\[
  {\varGamma^\star}^\prime=\begin{pmatrix}1&1&\ldots&1&-1&0\\1&1&\ldots&1&0&-1
  \end{pmatrix}.
\]
by the $1\times(m+2)$ complex matrix
$\varOmega=(\omega_1\ldots\:\omega_{m+2})$ where $\omega_k=1+i$
for $1\le k\le m$,\; $\omega_{m+1}=-1$, and $\omega_{m+2}=-i$. The
corresponding configuration of points in~$\C$ is shown on the
figure; note that it satisfies conditions~(i) and~(ii) of
Section~\ref{fqtp}. Then the manifold $N'$ defined
by~\eqref{nprimeodd} acquires a structure of a complex manifold of
dimension~$m$, and we have $N'\cong\zp\times S^1$. We therefore
obtain a complex structure on $S^{2n+1}\times S^1$; this complex
structure may be shown to be equivalent to that of a Hopf
manifold.

\noindent\raisebox{0.5\baselineskip}[0pt]
{%
\begin{picture}(30,30)
  \put(25,25){\circle*{1.3}}
  \put(5,15){\circle*{1.3}}
  \put(15,5){\circle*{1.3}}
  \put(18,27){\small$m\,{\cdot}\,(1+i)$}
  \put(3,11){$-1$}
  \put(10.5,2){$-i$}
  \put(15,0){\vector(0,1){30}}
  \put(0,15){\vector(1,0){30}}
\end{picture}%
}
\vspace{-0.9\baselineskip}
\end{example}

\begin{example}[Calabi--Eckmann manifold]
Another example of non K\"ahler complex manifold is due to
Calabi--Eckmann. It is obtained by endowing the fibre $S^1\times
S^1$ of the product of two Hopf bundles $S^{2p+1}\times
S^{2q+1}\to\C P^p\times \C P^q$ with a structure of a complex
torus; therefore the total space $S^{2p+1}\times S^{2q+1}$ also
acquires a complex structure. Like in the case of Hopf manifolds,
these complex structures are particular cases of the complex
structures on $\zp$ described in Theorem~\ref{bometh}.

\hangindent=37mm \hangafter=11 Let
$P=\Delta^{p-1}\times\Delta^{q-1}$ be a product of two simplices
($p>1$ and $q>1$), so that $m=p+q=n+2$, and $\zp\cong
S^{2p-1}\times S^{2q-1}$ is given by the equations of
Example~\ref{prodsimex}. Since $m+n$ even, we need to consider
$\zp'\cong\zp\times S^1$ given by
\[
  \left\{\begin{array}{ll}
  \mb z\in\C^{m+1}\colon&|z_1|^2+\ldots+|z_p|^2-|z_{p+1}|^2-\ldots-|z_m|^2=0,\\[1mm]
  &|z_1|^2+\ldots+|z_m|^2-|z_{m+1}^2|=0,\\[1mm]
  &|z_1|^2+\ldots+|z_m|^2+|z_{m+1}^2|=1.
  \end{array}\right\}
\]
Then we replace the $2\times(m+1)$ matrix
\[
  {\varGamma^\star}^\prime=\begin{pmatrix}1&\ldots&1&-1&\ldots&-1&0\\1&\ldots&1&1&\ldots&1&-1
  \end{pmatrix}.
\]
by the $1\times(m+1)$ complex matrix
$\varOmega=(\omega_1\ldots\:\omega_{m+1})$ where $\omega_k=1+i$
for $1\le k\le p$, \; $\omega_k=-1+i$ for $p+1\le k\le m$, and
$\omega_{m+1}=-i$. The corresponding configuration of points
in~$\C$ is shown on the figure; it also satisfies conditions~(i)
and~(ii) of Section~\ref{fqtp}. Then the manifold $N'$ defined
by~\eqref{nprimeeven} acquires a structure of a complex manifold
of dimension~$m-1$, and we have $N'\cong\zp$. We therefore obtain
a complex structure on $S^{2p-1}\times S^{2q-1}$; this complex
structure may be shown to be equivalent to that of a
Calabi--Eckmann manifold.

\noindent\raisebox{0.5\baselineskip}[0pt]
{%
\begin{picture}(30,30)
  \put(25,25){\circle*{1.3}}
  \put(5,25){\circle*{1.3}}
  \put(15,5){\circle*{1.3}}
  \put(18,27){\small$p\,{\cdot}\,(1+i)$}
  \put(0,27){\small$q\,{\cdot}\,(\!-1+i)$}
  \put(10.5,2){$-i$}
  \put(15,0){\vector(0,1){30}}
  \put(0,15){\vector(1,0){30}}
\end{picture}%
}
\vspace{-0.9\baselineskip}
\end{example}

\begin{remark}
There is also a more direct method of giving $\zp$ a complex
structure, without referring to projectivised quadrics and LVM
manifolds, see~\cite{pa-us10}. It identifies $\zp$ with the
quotient of $U(\sK_P)$ by a holomorphic action of $\C^\ell$, and
may be generalised to some non polytopal moment-angle manifolds
$\zk$ (namely those for which $\sK$ is the underlying complex of a
complete simplicial fan).
\end{remark}

\section{Dolbeault cohomology and Hodge numbers}
Here we use the construction~\cite{me-ve04} of holomorphic
principal bundles over projective toric varieties and a spectral
sequence due to Borel~\cite{bore} to describe the Dolbeault
cohomology of $\zp$ in the case when $P$ is a Delzant polytope.

Given a complex $n$-dimensional manifold $M$, there is a
decomposition $\Omega_\C^*(M)=\bigoplus\Omega^{p,q}(M)$ of the
space of complex differential forms on $M$ into a direct sum of
the subspaces of \emph{$(p,q)$-forms} for $0\le p,q\le n$, and the
\emph{Dolbeault differential}
$\bar\partial\colon\Omega^{p,q}(M)\to\Omega^{p,q+1}(M)$. The
dimensions $h^{p,q}$ of the Dolbeault cohomology groups
$H_{\bar\partial}^{p,q}(M)$ are known as the \emph{Hodge numbers}
of~$M$. They are important invariants of the complex structure
of~$M$.

Assume now that $P$ given by~\eqref{ptope} is a Delzant polytope
(see Section~\ref{levelset}). Then there is a principal
$T^{m-n}$-bundle $\zp\to X_P$ (see Lemma~\ref{freeact}), where
$X_P$ is the nonsingular projective toric variety corresponding
to~$P$.

Assume now that $m-n$ is even (otherwise we add one redundant
inequality to~\eqref{ptope}), and let $m-n=2\ell$. A construction
of~\cite{me-ve04} defines a structure of a holomorphic principal
bundle on $\zp\to X_P$, with fibre a compact complex
$\ell$-dimensional torus~$T_\C^\ell$. A spectral sequence of
Borel~\cite{bore} enables us to calculate the Dolbeault cohomology
of the total space of a holomorphic bundle with a K\"ahlerian
fibre in terms of the Dolbeault cohomology of the fibre and base.
In the case of the bundle $\zp\to X_P$ the Dolbeault cohomology of
the fibre and base are well-known and easily described.

The Dolbeault cohomology of $T_\C^\ell$ is isomorphic to an
exterior algebra on $2\ell$ generators:
\begin{equation}\label{dolbtorus}
  H_{\bar\partial}^{*,*}(T_\C^\ell)\cong
  \Lambda[\xi_1,\ldots,\xi_\ell,\eta_1,\ldots,\eta_\ell],
\end{equation}
where $\xi_i\in H_{\bar\partial}^{1,0}(T_\C^\ell)$ are classes of
holomorphic 1-forms, and $\eta_i\in
H_{\bar\partial}^{0,1}(T_\C^\ell)$ are classes of antiholomorphic
1-forms, for $1\le i\le\ell$. In particular, the Hodge numbers are
given by $h^{p,q}(T_\C^\ell)=\binom\ell p\binom\ell q$.

The Dolbeault cohomology of a nonsingular projective toric variety
$X_P$ is given by a result of Danilov--Jurkiewicz~\cite{dani78}:
\begin{equation}\label{dolbtoric}
  H_{\bar\partial}^{*,*}(X_P)\cong
  \C[v_1,\ldots,v_m]/(\mathcal I_{\sK_P}+\mathcal J_{\Sigma_P}),
\end{equation}
where $v_i\in H_{\bar\partial}^{1,1}(X_P)$ for $1\le i\le m$,
$\mathcal I_{\sK_P}=(v^I\colon I\notin\sK_P)$ is the
Stanley--Reisner ideal (see Definition~\ref{deffr}), and $\mathcal
J_{\Sigma_P}$ is generated by the linear combinations
$\sum_{k=1}^ma_{kj}v_k$ for $1\le j\le n$, where $a_{kj}$ is the
$j$th coordinate of~$\mb a_k$. We have $h^{p,p}(X_P)=h_p(P)$,
where $(h_0(P),h_1(P),\ldots,h_n(P))$ is the \emph{$h$-vector} of
$P$~\cite[\S1.2]{bu-pa02}, and $h^{p,q}(X_P)=0$ for $p\ne q$.
Since $X_P$ is K\"ahler, its de Rham cohomology algebra (with
coefficients in~$\C$) is obtained from the Dolbeault cohomology by
passing to the total degree.

\begin{theorem}[\cite{pa-us10}]\label{dolbzp}
Let $P$ be a $n$-dimensional Delzant polytope defined by $m$
inequalities~\eqref{ptope} of which at most one is redundant,
and~$m-n=2\ell$. Let $\zp$ be the moment-angle manifold with a
complex structure of an LVM-manifold. Then the Dolbeault
cohomology group $H_{\bar\partial}^{p,q}(\zp)$ is isomorphic to
the $(p,q)$-th cohomology group of the differential bigraded
algebra
\[
  \bigl[\Lambda[\xi_1,\ldots,\xi_\ell,\eta_1,\ldots,\eta_\ell]\otimes
  H_{\bar\partial}^{*,*}(X_P),d\bigr]
\]
whose bigrading is defined by~\eqref{dolbtorus}
and~\eqref{dolbtoric}, and differential $d$ of bidegree $(0,1)$ is
defined on the generators as
\[
  dv_i=d\eta_j=0,\quad d\xi_j=w_j,\quad
  1\le i\le m,\;1\le j\le\ell,
\]
where the $w_j$ are certain linearly independent elements in
$H_{\bar\partial}^{1,1}(X_P)$ whose explicit form depends on the
complex structure of~$\zp$.
\end{theorem}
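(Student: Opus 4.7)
The plan is to exploit the holomorphic principal $T_\C^\ell$-bundle $\zp\to X_P$ provided by the construction of~\cite{me-ve04}, and to feed it into Borel's spectral sequence for the Dolbeault cohomology of a holomorphic fibre bundle with K\"ahler fibre. Since the base $X_P$ is a smooth projective toric variety (K\"ahler), the fibre $T_\C^\ell$ is a compact complex torus (K\"ahler), and the structural group is connected, the hypotheses of Borel's theorem are satisfied and the monodromy acts trivially on the Dolbeault cohomology of the fibre.

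First I would write down the $E_2$-page of the Borel spectral sequence:
\[
  E_2^{*,*}\;\cong\;H_{\bar\partial}^{*,*}(X_P)\otimes H_{\bar\partial}^{*,*}(T_\C^\ell)
  \;\cong\;\C[v_1,\ldots,v_m]/(\mathcal I_{\sK_P}+\mathcal J_{\Sigma_P})\otimes
  \Lambda[\xi_1,\ldots,\xi_\ell,\eta_1,\ldots,\eta_\ell],
\]
using \eqref{dolbtorus} and \eqref{dolbtoric}. The spectral sequence converges to $H_{\bar\partial}^{*,*}(\zp)$, and its differentials preserve total bidegree $(p,q)$ while raising $q$ by one on each page.

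Next I would compute the transgressions on the fibre generators. The class $\eta_j\in H_{\bar\partial}^{0,1}(T_\C^\ell)$ must transgress into $H_{\bar\partial}^{0,2}(X_P)$, which vanishes since the Hodge numbers of a smooth projective toric variety are concentrated on the diagonal; hence $d\eta_j=0$. The class $\xi_j\in H_{\bar\partial}^{1,0}(T_\C^\ell)$ transgresses to an element $w_j\in H_{\bar\partial}^{1,1}(X_P)$. To identify $w_j$, I would interpret the principal $T_\C^\ell$-bundle as the product of $\ell$ holomorphic $\C^\times$-bundles, each determined by a $\C$-linear functional read off from the rows of the matrix $\varOmega$ used in Construction from the previous section; the $w_j$ are then the first Chern classes of the corresponding holomorphic line bundles. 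Linear independence of the $w_j$ in $H_{\bar\partial}^{1,1}(X_P)$ follows from the fact that the $\ell$ rows of $\varOmega$ are linearly independent over $\C$, which in turn is forced by the nondegeneracy of the intersection of quadrics defining $\zp$.

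Finally, I would argue collapse: since every class in $\Lambda[\xi,\eta]\otimes H_{\bar\partial}^{*,*}(X_P)$ of bidegree $(p,q)$ with $p\ne q$ already lies in the image of $d$ or in a subspace where the target of all higher differentials vanishes (again because $H_{\bar\partial}^{p,q}(X_P)=0$ for $p\ne q$), no further differentials beyond the transgression of the $\xi_j$ can be nonzero. Therefore $E_3=E_\infty$, and since we are working over a field the associated graded algebra coincides with the cohomology of the dg algebra described in the statement. I expect the main obstacle to be the explicit identification of the $w_j$: one must trace through the passage from the real quadrics \eqref{zpomega} to the holomorphic bundle structure of~\cite{me-ve04} to see that the transgression really is computed by the complex matrix~$\varOmega$; this is where the dependence on the choice of complex structure on $\zp$ enters the answer. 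Once this identification is in place, the only remaining work is the routine verification that the Borel differential on the chosen model agrees with $\bar\partial$ on representative forms along the fibre, which is a direct check on the Maurer--Cartan forms of~$T_\C^\ell$.
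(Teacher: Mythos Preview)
Your approach is essentially the one the paper itself indicates: the paper does not give a self-contained proof of this theorem but states it with a reference to~\cite{pa-us10}, and the surrounding discussion outlines precisely your strategy---feed the holomorphic principal $T_\C^\ell$-bundle $\zp\to X_P$ of~\cite{me-ve04} into Borel's spectral sequence~\cite{bore}, using~\eqref{dolbtorus} and~\eqref{dolbtoric} for the fibre and base. Your handling of the transgressions ($d\eta_j=0$ for Hodge-degree reasons, $d\xi_j=w_j\in H_{\bar\partial}^{1,1}(X_P)$) and your acknowledgement that the explicit form of the $w_j$ depends on the matrix $\varOmega$ match the paper's presentation; the only place where your sketch is looser than a full proof is the collapse argument at $E_3$, which is better argued via multiplicativity (the surviving generators $\eta_j$ and the base classes are permanent cycles for degree reasons, hence so is everything they generate) than via the $p\ne q$ reasoning you give.
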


\begin{corollary}\label{hodge}
Let $\zp$ be as in Theorem~\ref{dolbzp}, and let $k\le1$ be the
number of redundant inequalities in~\eqref{ptope}. Then
\begin{itemize}
\item[(a)] $h^{p,0}(\zp)=0$ for $p>0$;
\item[(b)] $h^{0,q}(\zp)=\binom\ell q$ for $q\ge0$;
\item[(c)] $h^{1,q}(\zp)=(\ell-k)\binom\ell{q-1}$ for $q\ge1$.
\end{itemize}
\end{corollary}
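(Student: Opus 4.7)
The plan is to compute the cohomology of the dg bigraded algebra
$A := \Lambda[\xi_1,\ldots,\xi_\ell,\eta_1,\ldots,\eta_\ell]\otimes H_{\bar\partial}^{*,*}(X_P)$
of Theorem~\ref{dolbzp} directly in each of the three bidegrees $(p,0)$, $(0,q)$, $(1,q)$ by enumerating admissible monomials and exploiting the fact that the differential is nonzero only on the $\xi_j$, sending them to linearly independent classes $w_j\in H_{\bar\partial}^{1,1}(X_P)$. The bidegrees $(1,0)$ for $\xi_j$, $(0,1)$ for $\eta_j$, $(1,1)$ for $v_i$ force very rigid restrictions on the shape of elements in $A^{p,q}$ for small $p$, which is what makes everything explicit.

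For part~(a), the only generators of first-degree $\leq1$ and second-degree $0$ are the $\xi_j$, so $A^{p,0}=\Lambda^p[\xi_1,\ldots,\xi_\ell]$, and no coboundaries can enter this group since $d$ has bidegree $(0,1)$. The differential restricted to $A^{p,0}$ is the Koszul-type map
\[
  \xi_{i_1}\wedge\cdots\wedge\xi_{i_p}\;\longmapsto\;\sum_{k=1}^p(-1)^{k-1}\,w_{i_k}\,\xi_{i_1}\cdots\widehat{\xi_{i_k}}\cdots\xi_{i_p}
\]
into $\Lambda^{p-1}[\xi]\otimes H_{\bar\partial}^{1,1}(X_P)$, and linear independence of $w_1,\ldots,w_\ell$ implies injectivity for every $p\geq1$ (expand an element in the $\xi^I$ basis and read off coefficients one $J=I\setminus\{i\}$ at a time). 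This yields $h^{p,0}(\zp)=0$. Part~(b) is immediate: $A^{0,q}=\Lambda^q[\eta_1,\ldots,\eta_\ell]$ since any $\xi_j$ or $v_i$ raises the first bidegree; $d$ vanishes on this summand and on $A^{0,q-1}$; hence $H_{\bar\partial}^{0,q}(\zp)\cong\Lambda^q[\eta]$ of dimension $\binom{\ell}{q}$.

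For part~(c), I would decompose
\[
  A^{1,q}\;=\;\bigl(\Lambda^1[\xi]\otimes\Lambda^q[\eta]\bigr)\;\oplus\;\bigl(H_{\bar\partial}^{1,1}(X_P)\otimes\Lambda^{q-1}[\eta]\bigr),
\]
since a second $\xi$ or a $v$ would push the first bidegree beyond $1$. The differential annihilates the $v$-summand (as $dv_i=d\eta_j=0$) and acts on the $\xi$-summand by $\xi_j\otimes\eta^J\mapsto w_j\otimes\eta^J$. Linear independence of the $w_j$'s makes this second map injective on each $\Lambda^q[\eta]$-component, so the cocycles in $A^{1,q}$ are precisely $H_{\bar\partial}^{1,1}(X_P)\otimes\Lambda^{q-1}[\eta]$. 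Dually, the coboundaries are the image of $d\colon A^{1,q-1}\to A^{1,q}$; only the $\xi$-piece of $A^{1,q-1}$ contributes, and it maps onto $\langle w_1,\ldots,w_\ell\rangle\otimes\Lambda^{q-1}[\eta]$, of dimension $\ell\binom{\ell}{q-1}$. Therefore
\[
  h^{1,q}(\zp)=\bigl(\dim H_{\bar\partial}^{1,1}(X_P)-\ell\bigr)\binom{\ell}{q-1}.
\]

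The main obstacle—really the only substantive bookkeeping—is the identification $\dim H_{\bar\partial}^{1,1}(X_P)=m-n-k=2\ell-k$ in the presence of $k\le1$ redundant inequalities. In the Danilov--Jurkiewicz presentation~\eqref{dolbtoric}, each redundant facet $F_i=\varnothing$ contributes a ghost vertex $\{i\}\notin\sK_P$, so $v_i\in\mathcal I_{\sK_P}$ kills the corresponding generator in the quotient; meanwhile the $n$ linear generators of $\mathcal J_{\Sigma_P}$ remain independent on the surviving $v_j$'s because at any irredundant Delzant vertex the corresponding $\mb a_j$'s already span~$\R^n$. Thus $\dim H_{\bar\partial}^{1,1}(X_P)=(m-k)-n=2\ell-k$, and substituting into the displayed formula yields $h^{1,q}(\zp)=(\ell-k)\binom{\ell}{q-1}$, completing the proof.
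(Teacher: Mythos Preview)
Your proof is correct and is precisely the computation the paper leaves to the reader; the corollary is stated without proof there, as an immediate consequence of Theorem~\ref{dolbzp}. Your bookkeeping in each bidegree is accurate, the injectivity argument for the Koszul-type map in part~(a) is valid (it is the standard interior-product argument), and your identification $\dim H_{\bar\partial}^{1,1}(X_P)=m-n-k=2\ell-k$ via the Danilov--Jurkiewicz presentation with ghost vertices is exactly what is needed to close part~(c).
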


\begin{remark}
Note that $h^{1,0}(\zp)<h^{0,1}(\zp)$, which implies that $\zp$ is
not K\"ahlerian.
\end{remark}

\begin{example}Let $\zp\cong S^1\times S^{2n+1}$ be the Hopf manifold of
Example~\ref{hopf}. The corresponding fan is the normal fan
$\Sigma_P$ of the standard $n$-dimensional simplex $P$ with one
redundant inequality. We have $X_P=\C P^n$, and~\eqref{dolbtoric}
describes its cohomology as the quotient of
$\C[v_1,\ldots,v_{n+2}]$ by the two ideals: $\mathcal I$ generated
by $v_1$ and $v_2\cdots v_{n+2}$, and $\mathcal J$ generated by
$v_2-v_{n+2},\ldots,v_{n+1}-v_{n+2}$. The differential algebra of
Theorem~\ref{dolbzp} is therefore given by
$\bigl[\Lambda[\xi,\eta]\otimes\C[t]/t^{n+1},d\bigr]$, and
$dt=d\eta=0$, $d\xi=\alpha t$ for some $\alpha\ne0$. The
nontrivial cohomology classes are represented by the cocycles $1$,
$\eta$, $\xi t^n$ and $\xi\eta t^n$, which gives the following
nonzero Hodge numbers of~$\zp$:
$h^{0,0}=h^{0,1}=h^{n+1,n}=h^{n+1,n+1}=1$.
\end{example}

It is also interesting to compare Theorem~\ref{dolbzp} with the
following description of the ordinary cohomology of~$\zp$.

\begin{theorem}[{\cite[Th.~7.36]{bu-pa02}}]\label{cohomzpred}
The cohomology algebra of $\zp$ (with coefficients in a field) is
isomorphic to the cohomology of the differential graded algebra
\[
  \bigl[\Lambda[u_1,\ldots,u_{m-n}]\otimes
  H^*(X_P),d\bigr],
\]
where $\deg u_i=1$, $\deg v_i=2$, and differential $d$ is defined
on the generators as
\[
  dv_j=0,\quad du_j=\gamma_{j1}v_1+\ldots+\gamma_{jm}v_m,\quad
  1\le j\le m,
\]
where $\varGamma=(\gamma_{jk})$ is given by~\eqref{iprn}.
\end{theorem}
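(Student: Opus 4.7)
The plan is to derive Theorem~\ref{cohomzpred} from the cohomology computation of Theorem~\ref{zkcoh} by an algebraic reduction. The big model $\Lambda[u_1,\ldots,u_m]\otimes\Z[\sK_P]$ supplied by Theorem~\ref{zkcoh} contains, after a linear change of the odd generators, a Koszul subcomplex on a linear system of parameters of $\Z[\sK_P]$ generating the ideal $\mathcal J_{\Sigma_P}$; killing this acyclic factor leaves exactly the smaller model of Theorem~\ref{cohomzpred}.

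First I would apply Theorem~\ref{zkcoh} to $\sK=\sK_P$ to obtain $H^*(\zp)\cong H\bigl[\Lambda[u_1,\ldots,u_m]\otimes\Z[\sK_P],d\bigr]$ with $du_i=v_i$. Then I perform a linear change of variables in the odd generators: set $u_j'=\sum_{k}\gamma_{jk}u_k$ for $1\le j\le m-n$, and $w_j=\sum_{k}a_{kj}u_k$ for $1\le j\le n$, where $a_{kj}$ is the $j$th coordinate of the row vector $\mb a_k$ of~$A_P$. The $m\times m$ matrix whose row blocks are $\varGamma$ and $A_P^t$ is invertible over the coefficient field, because the identity $\varGamma A_P=\mathbf 0$ forces its two row spaces in $\R^m$ to be orthogonal, and their dimensions $m-n$ and $n$ add up to~$m$. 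Consequently $\Lambda[u_1,\ldots,u_m]=\Lambda[u_1',\ldots,u_{m-n}']\otimes\Lambda[w_1,\ldots,w_n]$, and the differential transports to $du_j'=\sum_k\gamma_{jk}v_k$ and $dw_j=\theta_j$, where $\theta_j:=\sum_k a_{kj}v_k$ are precisely the generators of~$\mathcal J_{\Sigma_P}$.

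I then regroup the model as $\Lambda[u_1',\ldots,u_{m-n}']\otimes\bigl(\Lambda[w_1,\ldots,w_n]\otimes\Z[\sK_P]\bigr)$, where the inner factor is the Koszul complex of the sequence $\theta_1,\ldots,\theta_n$ acting on $\Z[\sK_P]$ and the outer factor is a free graded module over the base field. If $\theta_1,\ldots,\theta_n$ is a regular sequence in~$\Z[\sK_P]$, the Koszul complex is a resolution, and the augmentation $w_j\mapsto 0$ supplies a DGA quasi-isomorphism of $\Lambda[w_1,\ldots,w_n]\otimes\Z[\sK_P]$ onto $\Z[\sK_P]/(\theta_1,\ldots,\theta_n)$, which is the Danilov--Jurkiewicz presentation of~$H^*(X_P)$. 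Tensoring with the outer factor preserves the quasi-isomorphism and sends the induced differential $du_j'=\sum_k\gamma_{jk}v_k$ to its image in $H^*(X_P)$. Relabelling $u_j'$ as $u_j$ yields the formula of Theorem~\ref{cohomzpred}.

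The main obstacle is verifying that $\theta_1,\ldots,\theta_n$ is a regular sequence in~$\Z[\sK_P]$. Since $\sK_P=\partial P^*$ is a simplicial $(n-1)$-sphere, Reisner's theorem makes $\Z[\sK_P]$ Cohen--Macaulay of Krull dimension~$n$ over the coefficient field, and in a graded Cohen--Macaulay ring any homogeneous linear system of parameters is automatically a regular sequence. That $\theta_1,\ldots,\theta_n$ is such a system is the combinatorial content of the general-position assumption on the~$\mb a_i$: for each vertex $v=F_{i_1}\cap\ldots\cap F_{i_n}$ of~$P$ the vectors $\mb a_{i_1},\ldots,\mb a_{i_n}$ form a basis of~$\R^n$, which forces $\Z[\sK_P]/(\theta_1,\ldots,\theta_n)$ to be finite-dimensional. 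Combining the two statements completes the reduction.
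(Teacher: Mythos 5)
Your proposal takes the first of the two routes the paper sketches for this theorem --- the ``homological deduction from Theorem~\ref{zkcoh}'' --- and fills in the details correctly; the paper itself gives no proof and only points at this route and, alternatively, at collapse of the Leray--Serre spectral sequence of the principal $T^{m-n}$-bundle $\zp\to X_P$. Splitting the odd generators along $\varGamma$ and $A_P^t$, identifying the $\Lambda[w_1,\ldots,w_n]$-factor with the Koszul complex on $\theta_1,\ldots,\theta_n$, and invoking Reisner's theorem for the Cohen--Macaulay property of $\mathbf k[\sK_P]$ (so that the general-position l.s.o.p.\ is automatically a regular sequence) is exactly the algebraic content behind Buchstaber--Panov's Theorem~7.36 and the Danilov--Jurkiewicz presentation of $H^*(X_P)$. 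Two small things deserve more care. First, ``tensoring with the outer factor preserves the quasi-isomorphism'' is formally imprecise, since $du_j'$ lands in $\mathbf k[\sK_P]$, so $\Lambda[u']\otimes(\Lambda[w]\otimes\mathbf k[\sK_P])$ is not a tensor product of complexes; the clean way to finish is to filter the big DGA by exterior degree in the $u'_j$, note that the projection $w_j,\theta_j\mapsto 0$ respects the filtration, and observe that it is a quasi-isomorphism on the associated graded because the Koszul complex on a regular sequence is a resolution --- the comparison theorem then gives that the projection is itself a quasi-isomorphism of DGAs, hence a ring isomorphism on cohomology. Second, the invertibility of the $m\times m$ matrix stacked from $\varGamma$ and $A_P^t$ via orthogonality of row spaces is valid over $\Q$, $\R$ or $\C$, but the argument as stated does not carry over to fields of positive characteristic, where orthogonal does not imply complementary; for the Delzant case one can sidestep this by choosing $\varGamma$ integral and checking the determinant is a unit, but the proposal should at least flag the restriction on the coefficient field.
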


Theorem~\ref{cohomzpred} may be deduced from the general
Theorem~\ref{zkcoh} using homological methods. In the case when
$P$ is Delzant the above theorem, like Theorem~\ref{dolbzp}, is
the collapse result for a spectral sequence (this time the
Leray--Serre spectral sequence of the principal $T^{m-n}$-bundle
$\zp\to X_P$).

\newpage

\end{document}